\theoremstyle{definition}
\newtheorem{theorem}{Theorem}[section]
\newtheorem{lemma}[theorem]{Lemma}
\newtheorem{corollary}[theorem]{Corollary}
\newcommand{\ccr}{c_{cr}}
\DeclareMathOperator{\DS}{\text{DS}}
\newcommand{\push}{p_{cr}}
\begin{document}

\title{Cops against a cheating robber}
\date{}

\author{
Nancy E.~Clarke\thanks{
Department of Mathematics and Statistics, 
Acadia University, Wolfville, NS, Canada.
Research support by NSERC grant \#2020-06528.
}
\and
Danny Dyer \thanks{
Department of Mathematics and Statistics,
Memorial University of Newfoundland, St.~John's, NL, Canada. Supported by NSERC.
}
\and
William Kellough \footnotemark[2]
}

\maketitle

\begin{abstract}
    We investigate a cheating robot version of Cops and Robber, first introduced by Huggan and Nowakowski, where both the cops and the robber move simultaneously, but the robber is allowed to react to the cops' moves. For conciseness, we refer to this game as Cops and Cheating Robot. The cheating robot number for a graph is the fewest number of cops needed to win on the graph. We introduce a new parameter for this variation, called the push number, which gives the value for the minimum number of cops that move onto the robber's vertex given that there are a cheating robot number of cops on the graph. After producing some elementary results on the push number, we use it to give a relationship between Cops and Cheating Robot and Surrounding Cops and Robbers. We investigate the cheating robot number for planar graphs and give a tight bound for bipartite planar graphs. We show that determining whether a graph has a cheating robot number at most fixed $k$ can be done in polynomial time. We also obtain bounds on the cheating robot number for strong and lexicographic products of graphs. 
\end{abstract}

\begin{quote} \small {\bf Keywords:} discrete time graph processes; pursuit-evasion; cheating robot, Cops and Robber

\smallskip 

{\bf MSC2020: 05C57, 91A43}
\end{quote}

\section{Introduction}\label{sec: intro}

Cops and Robber is a two-player game played on the vertices of a graph. The game begins with a set of cops and a robber being placed on the vertices of a graph. The cops and the robber take turns moving across edges in the graph. If a cop moves to a vertex occupied by the robber, the cops win and the game ends. Otherwise, the robber wins. Cops and Robber was first introduced by Quilliot \cite{Q78} and independently by Nowakowski and Winkler \cite{NW83} where the game was considered with only one cop. Aigner and Fromme \cite{AF84} were the first to study Cops and Robber with multiple cops. The fewest number of cops needed to win on a graph $G$ is called the \emph{cop number} of the graph and is denoted $c(G)$ \cite{AF84}. A survey on Cops and Robber can be found in \cite{BN11}. 

In \cite{HN21}, Huggan and Nowakowski introduced a variant of Cops and Robber where the cops and the robber move simultaneously and the robber is allowed to ``cheat" by knowing in advance how the cops will move each round. \emph{Cheating robot games} are two-player games in which both players move simultaneously, but one player is able to react to the other's move. These types of games are discussed in \cite{H19, HN23}. The player who is able to react to their opponent's moves is referred to as a \emph{cheating robot}. So, we will refer to Huggan and Nowakowski's variant from \cite{HN21} as \emph{Cops and Cheating Robot}. The minimum number of cops needed to win Cops and Cheating Robot on a given graph is called the \emph{cheating robot number}.

In Section \ref{sec: tech intro} we give some preliminary results including an answer to an open question in \cite{HN21}. In Section \ref{sec: push num} we introduce a new parameter called the \emph{push number} for Cops and Cheating Robot and use it to compare the game to Surrounding Cops and Robbers, introduced in~\cite{BCCDFJP20}, another variant of Cops and Robber. The push number is also used in Section \ref{sec: graph products} to obtain upper bounds on the cheating robot number for the strong product of graphs. In Section \ref{sec: planar graphs} we give a tight upper bound on the cheating robot number for bipartite planar graphs. In Section \ref{sec: complexity} we show that determining whether the cheating robot number is at most $k$ on a given graph can be done in polynomial time. We end with directions for future research in Section \ref{sec: further directions}. 

\section{Technical Introduction}\label{sec: tech intro}


Cops and Cheating Robot begins with a set of cops being placed on the vertices of a graph, followed by the robber being placed on a vertex not occupied by a cop. During each turn, the cops and the robber simultaneously either move to a vertex adjacent to the one they currently occupy or stay at the vertex they are occupying. If a cop moves to the vertex occupied by the robber, the cops win. Otherwise, the robber wins. However, while everyone moves at the same time, the robber knows in advance how the cops will move each round. This gives the robber the advantage of being a ``cheating robot" as discussed in Section \ref{sec: intro}. To help distinguish the evader in Cops and Cheating Robot from the evaders of other games we will analyze, we will adopt the nickname ``Robert" for the robber in Cops and Cheating Robot. 

As mentioned in \cite{HN21}, the ruleset of Cops and Cheating Robot can be thought of as turn-based where instead the cops and Robert alternate making moves. When translating Cops and Cheating Robot into turn-based play, the win condition for the cops changes. In this case, the cops win if either Robert ends his turn on a cop vertex or if Robert traverses an edge that was traversed by a cop on her previous move. As was done in \cite{HN21}, whenever we mention Cops and Cheating Robot, we will be referring to this turn-based ruleset.

The \emph{cheating robot number} of a graph $G$, denoted $\ccr(G)$, is the fewest number of cops needed to capture Robert when playing Cops and Cheating Robot on $G$. Huggan and Nowakowski \cite{HN21} proved the following, useful lower bound on the cheating robot number of any graph. 

\begin{theorem}\label{thm: k-core lower bound on ccr}
\cite{HN21} If $G$ is a graph with a $k$-core where $k\in \mathbb{Z}^+$, then $\ccr(G)\geq k$.
\end{theorem}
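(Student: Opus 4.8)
The plan is to establish the lower bound directly, by exhibiting an evasion strategy for Robert against any $k-1$ cops; producing such a strategy shows that $k-1$ cops cannot win, hence $\ccr(G) \geq k$. Let $H$ be the (nonempty) $k$-core of $G$, so that $\deg_H(v) \geq k$ for every $v \in V(H)$ and in particular $|V(H)| \geq k+1$. Since the cops occupy at most $k-1$ vertices when Robert is placed, there is an unoccupied vertex of $H$, and Robert starts there. The entire strategy keeps Robert inside $H$, using the turn-based ruleset from Section~\ref{sec: tech intro}.

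I would carry this out by maintaining the invariant that at the end of each of Robert's turns he sits on a vertex $r \in V(H)$ carrying no cop. For the inductive step, suppose the invariant holds and the cops now move. Robert's legal destinations that remain in $H$ are the vertices of $\{r\} \cup N_H(r)$, of which there are at least $k+1$ (distinct, since $r \notin N_H(r)$). Call a destination $d$ \emph{forbidden} if some cop ends its move on $d$, or if the move $r \to d$ would traverse an edge just traversed by a cop. The crux is to show that the $k-1$ cops can forbid at most $k$ of these at least $k+1$ destinations, so a safe move always remains; making it restores the invariant, and since the game then continues forever, Robert wins.

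The step I expect to be the main obstacle is the accounting forced by the edge-traversal (``swap'') rule, because a single cop can forbid two destinations at once: a cop moving from a neighbour $a$ of $r$ onto $r$ both occupies $r$, forbidding the stall $d = r$, and traverses the edge $ra$, forbidding $d = a$. The observation that rescues the count is that, by the invariant, no cop begins the turn on $r$, so every edge conflict must come from a cop that moves \emph{onto} $r$ from some neighbour, and all such cops end on the single vertex $r$. Writing $m$ for the number of cops that move onto $r$, the edge rule then forbids only the (at most $m$) origins of these cops, while the occupancy rule forbids at most $1 + (k-1-m)$ destinations when $m \geq 1$ (the shared endpoint $r$, together with the final positions of the remaining $k-1-m$ cops) and at most $k-1$ destinations when $m = 0$. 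In every case the total number of forbidden destinations is at most $k$, which is strictly less than $k+1$. Hence Robert always has a safe move within $H$, the invariant is preserved indefinitely, and $k-1$ cops never capture him, giving $\ccr(G) \geq k$.
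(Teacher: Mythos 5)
Your proof is correct: the paper states this theorem as a cited result from \cite{HN21} and gives no proof of its own, and your core-confinement argument is the standard one for this bound. In particular, your accounting of the swap rule is handled properly — since the invariant guarantees no cop starts on $r$, every edge conflict comes from a cop moving onto $r$, so such a cop forbids both $r$ and its own origin but nothing else, and the $k-1$ cops forbid at most $k$ of the at least $k+1$ candidate destinations in $\{r\}\cup N_H(r)$, leaving Robert a safe move forever.
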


We note that the difference between the cheating robot number of a graph and the size of its largest $k$-core can be arbitrarily large.

\begin{theorem}\label{thm: ccr - size of largest k-core gets arbitratily large}
    If $N\in \mathbb{Z}^+$ then there exists a graph $G$ where $k$ is the largest integer such that $G$ contains a $k$-core and $\ccr(G) - k > N$.
\end{theorem}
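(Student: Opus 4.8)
The plan is to exhibit, for each target $N$, a graph whose largest core has a small fixed order while its cheating robot number is forced to be large by a direct evasion strategy for Robert; since only Theorem~\ref{thm: k-core lower bound on ccr} is available at this point, the large lower bound on $\ccr$ must come from Robert, not from the cops. I would take $G=G_m$ to be the subdivision of the complete graph $K_m$: keep the $m$ original vertices $v_1,\dots,v_m$ and replace each edge $v_iv_j$ by a path $v_i\,e_{ij}\,v_j$ through a new vertex $e_{ij}$. This $G_m$ is bipartite with parts $\{v_i\}$ and $\{e_{ij}\}$; each $e_{ij}$ has degree $2$ and each $v_i$ has degree $m-1$. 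The first step is the easy observation that for $m\ge 3$ the largest core has parameter $k=2$: every vertex has degree at least $2$, so $G_m$ is its own $2$-core, while any subgraph of minimum degree at least $3$ must avoid the degree-$2$ vertices $e_{ij}$, leaving only the independent set $\{v_i\}$, which has no edges. Thus $k=2$ regardless of $m$, and it remains to show $\ccr(G_m)\to\infty$.

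For the lower bound I would give Robert a strategy that keeps him on original vertices and ``hops'' $v_i\to e_{ij}\to v_j$ to a fresh original vertex every two of his moves, maintaining the invariant that at the start of each cop move Robert occupies an uncaptured original vertex. The heart of the argument is a counting/pigeonhole estimate: when Robert sits at $v_i$ and the cops move, a destination index $j$ is \emph{bad} only if either the connector $e_{ij}$ is immediately unsafe to enter (a cop lands on $e_{ij}$, or a cop crosses the edge $v_ie_{ij}$), or the target $v_j$ is reachable or blockable by some cop on the following move. Each cop rules out only a bounded number of indices $j$ under each of these conditions --- a cop on a subdivision vertex is incident to exactly two original vertices and a cop on an original vertex to one --- so the total number of bad indices is at most a constant times the number of cops $c$. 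Hence whenever $m-1$ exceeds this bound (roughly $4c$), a good index $j$ survives, and Robert steps to $e_{ij}$.

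The hard part, and the reason a naive argument fails, is that $e_{ij}$ has degree only $2$: once Robert stands on a connector he has just the three options $v_i$, $v_j$, and staying, and three suitably placed cops could cover all of them. I would defeat this by building the escape requirement into the choice of $j$ above: Robert selects $j$ so that in the current configuration no cop occupies $v_j$ or any connector $e_{jk}$ incident to $v_j$. Then on the next cop move no cop can reach $v_j$ (its only approaches are the $e_{jk}$, all empty) nor cross the edge $e_{ij}v_j$ from the $v_j$ side, so Robert's move $e_{ij}\to v_j$ is safe no matter what the cops do --- in particular he is immune to cops flooding $v_i$ and $e_{ij}$ from behind. This restores the invariant with $v_j$ in the role of $v_i$, and induction shows Robert evades $c$ cops as long as $c<(m-1)/4$.

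Putting the pieces together, $\ccr(G_m)\ge \lceil (m-1)/4\rceil$, which tends to infinity, while the largest core parameter is fixed at $k=2$. Given $N\in\mathbb{Z}^+$, I would simply choose $m$ large enough that $(m-1)/4>N+2$; then $\ccr(G_m)-k\ge (m-1)/4-2>N$, as required. The only points needing care in the full write-up are the exact capture conventions of the turn-based ruleset (what ``unsafe'' means when a cop traverses an incident edge versus lands on a vertex) and verifying that the initial placement lets Robert start on a safe original vertex; both are routine given the counting bound, and the constant $4$ can be sharpened but is irrelevant to the statement.
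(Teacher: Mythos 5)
Your proposal is correct and follows essentially the same route as the paper: the paper replaces each edge of the hypercube $Q_{n-1}$ by a $4$-cycle to get arbitrarily many high-degree hubs whose neighbours all have degree two (so the largest core is the $2$-core), and has Robert wait at a hub until pushed and then hop, via a degree-two connector, to one of the many hubs at distance two that no cop threatens -- exactly your wait-and-hop strategy with the same pigeonhole count. The only real difference is cosmetic: the paper's doubled connectors give two internally disjoint length-two paths between hubs, which sidesteps the single-connector delicacy you correctly handle by insisting that no cop lie in the closed neighbourhood of the target hub $v_j$.
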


\begin{proof}
    To prove the theorem, we give an infinite family of graphs where the every graph in the family contains a $2$-core, none of the graphs in the family contain a $3$-core and for any $N\in \mathbb{Z}^+$ there exists a graph $G$ in the family such that $\ccr(G) > N$. Consider the family of graphs $\{G_n\}_{n=2}^\infty$ where $G_n$ is obtained by replacing all of the edges in the hypercube $Q_{n-1}$ with 4-cycles. Every vertex of $G_n$ either has degree 2 or degree $2(n-1)$. Furthermore, all of the neighbours of each vertex of degree $2(n-1)$ have degree 2. Thus for all $n\geq 2$, $G_n$ contains a 2-core but not a $k$-core for any $k\geq 3$. Next, we claim that $\ccr(G_n) > n-1$.

    Fix $n\geq 2$ and suppose Robert plays against $n-1$ cops on the graph $G_n$. Since there are $2^{n-1}$ vertices of degree $2(n-1)$, regardless of where the cops place themselves at the start of the game Robert can place himself on a vertex of degree $2(n-1)$. There are $n-1$ other vertices of degree $2(n-1)$ that are each of distance two away from Robert and there are two internally disjoint paths from Robert's vertex to each of the other $n-1$ vertices. Since each of these $n-1$ vertices of degree $2(n-1)$ do not have any neighbours in common, it is not possible for one cop to be adjacent to more than one of these vertices. Therefore Robert can avoid capture indefinitely by waiting at a vertex of degree $2(n-1)$ until a cop moves to his position, and then moving to a vertex of degree $2(n-1)$ before any of the other $n-2$ cops can stop him.
\end{proof}

Huggan and Nowakowski \cite{HN21} gave necessary and sufficient conditions for graphs with a cheating robot number of one.

\begin{theorem}\label{thm: ccr of trees}
\cite{HN21} Let $G$ be a graph. Then $\ccr(G)=1$ if and only if $G$ is a tree.
\end{theorem}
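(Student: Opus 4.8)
The plan is to prove the two implications separately. Since $\ccr(G)\geq 1$ for any nonempty graph, it suffices to show (i) if $G$ is not a tree then $\ccr(G)\neq 1$, and (ii) if $G$ is a tree then a single cop suffices, i.e.\ $\ccr(G)\leq 1$. The second half is where the real work lies.

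For direction (i) I would argue by contrapositive, splitting into two cases according to why $G$ fails to be a tree. If $G$ is disconnected, then against a single cop Robert simply starts in a component not containing the cop; the cop can never leave her own component, so Robert evades capture forever and one cop cannot win. If instead $G$ is connected but contains a cycle, then $G$ has a nonempty $2$-core: repeatedly deleting vertices of degree at most $1$ never destroys a cycle, so the core survives with minimum degree at least $2$, and Theorem~\ref{thm: k-core lower bound on ccr} gives $\ccr(G)\geq 2$. Every non-tree falls into one of these cases, so $\ccr(G)=1$ forces $G$ to be connected and acyclic, hence a tree.

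For direction (ii) I would exhibit an explicit pursuit strategy for one cop on a tree $T$ and track a monotone potential. At the start of each of the cop's turns, with the cop at $c$ and Robert at $r\neq c$, let $S$ be the vertex set of the component of $T-c$ containing $r$, that is, Robert's \emph{escape territory}: the vertices he can reach without passing through the cop. Because $T$ is a tree, $S$ is exactly the branch hanging off $c$ through the first vertex $v_1$ on the unique $c$--$r$ path, rooted at $v_1$. The cop's move is to step onto $v_1$. The claim I would prove is that, after Robert reacts, any reply avoiding immediate capture leaves him in one of the branches hanging strictly below $v_1$, which is a proper subtree of $S$; the excluded replies---landing on the cop's vertex $v_1$, or (when Robert sits adjacent to the cop) swapping back across the edge $cv_1$ just traversed---are precisely the two ways the turn-based rules declare a capture, the swap being the feature that separates this game from ordinary Cops and Robber. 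Hence $|S|$ is a positive integer that strictly decreases every round, so within at most $|V(T)|$ rounds Robert is cornered at a leaf adjacent to the cop, where he can neither stay, advance, nor swap back, and is captured.

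The main obstacle I anticipate is entirely in direction (ii), and specifically in handling the robber's reactive advantage. The delicate point is that the raw distance $d(c,r)$ need not decrease under this strategy---Robert can always step away to restore it---so a naive ``close the gap'' argument fails; the genuine progress is the shrinking of the escape territory $S$. Verifying that $|S|$ strictly decreases therefore requires checking that every one of Robert's non-losing replies, including simply staying put, is confined to a single branch below $v_1$, together with a careful accounting of the swap-capture condition. Once the potential is set up correctly, these verifications reduce to a routine case analysis on the position of Robert's reply relative to $v_1$.
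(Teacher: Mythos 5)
Your proof is correct, but note that the paper does not actually prove this statement: Theorem~\ref{thm: ccr of trees} is imported from \cite{HN21} and stated without proof, so there is no internal argument to compare against. Your write-up is a sound, self-contained substitute. The forward direction is handled exactly as one would hope given the surrounding text: disconnected graphs are dispatched directly, and a connected non-tree has a nonempty $2$-core, so Theorem~\ref{thm: k-core lower bound on ccr} gives $\ccr(G)\geq 2$. For the converse, your potential function --- the size of the component of $T-c$ containing Robert --- is the right invariant, and you correctly identify the one point where this game differs from classical Cops and Robber: after the cop steps to the first vertex $v_1$ of the $c$--$r$ path, Robert's only replies that are not captures under the turn-based ruleset (landing on $v_1$, or swapping back across the edge $cv_1$) land him strictly inside a branch below $v_1$, so the territory loses at least the vertex $v_1$ each round and the cop wins in at most $|V(T)|$ moves. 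This matches the strategy one would extract from \cite{HN21}; the only cosmetic remark is that your case split on whether $r=v_1$ or $r$ is deeper in the branch could be merged, since in both cases every safe reply lies in $S\setminus\{v_1\}$.
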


By Theorem \ref{thm: k-core lower bound on ccr}, the minimum degree of a graph gives a lower bound for the cheating robot number. The following result shows that even for cubic graphs with a high enough girth, it is possible for the cheating robot number to be strictly larger than the minimum degree. 

\begin{theorem}
    If $G$ is a graph with $\delta(G) \geq 3$ and girth $g\geq 6$, then $\ccr(G) \geq 4$. 
\end{theorem}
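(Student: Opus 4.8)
The plan is to exhibit a winning strategy for Robert against three cops; since $\delta(G)\ge 3$ makes the whole graph a $3$-core, Theorem~\ref{thm: k-core lower bound on ccr} already gives $\ccr(G)\ge 3$, so it suffices to show that three cops cannot win. The first step is to record the local structure forced by $g\ge 6$: for any vertex $v$ the ball of radius two is a tree, so if $u_1,u_2,u_3$ are three neighbours of $v$ then the closed neighbourhoods $N[u_1],N[u_2],N[u_3]$ meet only in the common point $v$, and moreover a vertex within distance $1$ of $u_i$ and a vertex within distance $1$ of $u_j$ (with $i\ne j$) are distinct and non-adjacent unless one of them is $v$, since otherwise a cycle of length at most $5$ would appear. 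In other words, the three \emph{branches} hanging off $v$ are pairwise separated in the neighbourhood that matters for a single move.

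The second step is to analyse one cop turn. A degree-$3$ vertex $v$ gives Robert four replies (stay, or move to $u_1,u_2,u_3$). A cop blocks the reply ``move to $u_i$'' only by finishing its move on $u_i$ or by traversing the edge $u_iv$ in the opposite direction, and both require that cop to lie in $N[u_i]$ beforehand; the reply ``stay'' is blocked only when some cop finishes on $v$, which forces a cop to sit \emph{exactly} on a neighbour of $v$. Because $N[u_1],N[u_2],N[u_3]$ are disjoint off $v$, blocking all three moves needs three distinct cops, one per branch, and blocking ``stay'' in addition pins one of those cops onto a neighbour $u_j$. Hence three cops can capture Robert on a given turn only from the rigid configuration in which each branch contains a cop and at least one cop sits on a neighbour of $v$; call such a position \emph{lethal}. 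The goal becomes a strategy in which Robert, using his advance knowledge of the cops' move, never has to step onto a lethal vertex.

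The strategy I would use keeps Robert on a vertex possessing a \emph{clear branch}: a neighbour $u$ with $N[u]$ cop-free. Such a branch is always a legal and safe reply, since no cop can reach $u$ or collide with Robert on $uv$ in a single move. When the cops commit their move, Robert reads it and retreats into a clear branch, and the crux is to pick the branch so that the next vertex again has a clear branch. Here the girth does the work: to spoil a branch of the vertex Robert enters, a cop must be within distance two of that vertex inside that branch, but by the separation recorded above a cop close enough to threaten one branch is too far to land on, or collide along the edge into, any forward vertex of another branch in one move. Thus whenever the cops concentrate enough pieces toward one exit to endanger it, they are left behind the other exits, so the branch Robert actually takes opens onto fresh clear branches. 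I would pair this with the placement observation that, facing only three cops, Robert can start on a vertex with a clear branch, using the room guaranteed by the Moore bound for $g\ge 6$ and $\delta\ge 3$.

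The main obstacle is showing that this is a genuine invariant rather than something true for only a move or two. A coordinated three-cop push can drive the number of clear branches at Robert's vertex down to one, so the invariant must record that the cops responsible are \emph{behind} Robert, not merely count clear branches. I expect the clean route is a potential argument: track Robert's lead over the nearest cop measured along his current escape branch, show that entering a clear branch strictly refreshes this lead because the girth strands the spoiling cops at least two steps back, and then verify the finitely many local configurations (all governed by the tree structure of the radius-two ball) in which a cop sits on a neighbour of Robert. Confirming that each such configuration still leaves Robert a clear branch whose own forward neighbours are cop-free is the heart of the proof; once this local check is complete the invariant propagates, Robert evades three cops indefinitely, and therefore $\ccr(G)\ge 4$.
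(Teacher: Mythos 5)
Your overall plan---show that three cops cannot win, using the fact that girth $\geq 6$ makes every radius-two ball a tree so that the branches off Robert's vertex are pairwise separated---is sound, and your analysis of what a capturing configuration must look like at a degree-three vertex is essentially correct. But the proposal is a plan rather than a proof: the step you yourself identify as ``the heart of the proof,'' namely that Robert can perpetually maintain a vertex with a clear branch (or your refined potential-function version of that invariant), is exactly the content of the theorem and is left unverified. As stated, the clear-branch invariant is not obviously maintainable: a single cop at distance two from a neighbour $u$ of Robert's vertex can enter $N[u]$ on its move and spoil that branch, and three cops can in principle spoil all three branches simultaneously (e.g.\ by occupying $u_1,u_2,u_3$), so the whole burden falls on showing the cops can never reach such a configuration while Robert plays sensibly. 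Your proposed fix---a lead/potential argument plus ``verify the finitely many local configurations''---is precisely the case analysis that has not been carried out, so there is a genuine gap.

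The paper closes this gap by arguing backward rather than forward, which turns out to be much shorter: suppose for contradiction that three cops reach a position from which every move of Robert (including passing) leads to him being surrounded, and split on how many cops lie within distance one of Robert (zero, one, or two). In each case, for the cops to cover the closed neighbourhood of whichever vertex Robert escapes to, some cop at distance two must reach a second common neighbour, which forces a $C_4$ or $C_5$ and contradicts girth six. This one-round analysis avoids constructing or maintaining any invariant for Robert at all; you may find it a useful template, since your separation-of-branches observation is exactly the lemma that powers it.
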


\begin{proof}

    Suppose three cops are able to surround Robert at some vertex. We consider the possibilities for the positions of the cops and of Robert on Robert's last move before he is surrounded. Since we are assuming that every move Robert makes results in him being surrounded, including Robert's option to pass, Robert is on a vertex of degree three and every vertex adjacent to him that is not occupied by a cop has degree three. There are three cases for the position of the cops: none of the cops are within distance one of Robert, exactly one cop is within distance one of Robert and exactly two cops are within distance one of Robert. For convenience, we will say that a cop is \emph{near} Robert if she is at most distance one from him.

    \textbf{No cops near Robert:} Let $v$ be the vertex on which Robert starts on his final move. By moving to an adjacent vertex, Robert is able to avoid being surrounded for an extra turn since no cop can move onto $v$ in one move. This contradicts our assumption that the cops were one move away from surrounding Robert.

    \textbf{One cop near Robert:} There are two possibilities for the cop that is near Robert, either she is adjacent to Robert or she has moved onto Robert's vertex. In both cases, Robert has the same options for his final move and the other two cops are of distance two away from Robert before he moves. If Robert decides to move, the only way the cops can surround him is if the graph contains a cycle of length five, which contradicts our assumption that the graph has girth six.

    \textbf{Two cops near Robert:} If the two cops are adjacent to Robert and Robert moves on his final turn, one of those cops will have to move to a vertex adjacent to Robert that is different from Robert's initial vertex. Thus the graph contains a $C_4$. The same argument holds for the case where one cop is on the vertex occupied by Robert before his final move while the other cop is adjacent to him. If both cops are on the vertex occupied by Robert before his final move then, since Robert's initial vertex has degree three, Robert can move to a vertex that has two neighbours that are not adjacent to these two cops. Thus the cops cannot surround Robert. 
\end{proof}

It is easy to find examples of two graphs $G$ and $H$ such that $H$ is a subgraph of $G$ and $c(H) > c(G)$. For example, for any $n\geq 4$ it holds that $c(C_n) > c(K_n)$. It was posed as an open question in \cite{HN21} whether analogous examples existed for the cheating robot number. Here, we answer this question in the affirmative. 

\begin{theorem}\label{thm: ccr can increase wrt subgraphs deleting edges}
    There exists a graph $G$ with a connected subgraph $H$ such that $\ccr(H) > \ccr(G)$.
\end{theorem}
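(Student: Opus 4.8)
The plan is to exhibit an explicit pair $H\subseteq G$ realizing the strict inequality, reusing the ``replace each edge by a $4$-cycle'' gadget that drives Theorem~\ref{thm: ccr - size of largest k-core gets arbitratily large}, but applied to a triangle so the gap can be pinned down exactly. Concretely, I would let $H$ be obtained from $K_3$ on $\{u_1,u_2,u_3\}$ by replacing each edge $u_iu_j$ with a $4$-cycle: for each pair I add two new vertices $x_{ij},y_{ij}$, each adjacent to both $u_i$ and $u_j$. Then I take $G=H$ together with the three original triangle edges $u_1u_2,u_2u_3,u_3u_1$, so that $H$ is a connected spanning subgraph of $G$. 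Each $u_i$ is a ``hub'' (degree $4$ in $H$, degree $6$ in $G$), every other vertex has degree $2$, and in both graphs the three hubs are pairwise joined by two internally disjoint paths of length two. The goal is to prove $\ccr(H)\ge 3$ while $\ccr(G)=2$.

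For the lower bound on $H$, I would give Robert a survival strategy against two cops that uses the three hubs as havens. Since a hub has degree $4$, two cops can never cover its closed neighbourhood in a single move, so Robert is never captured while sitting on a hub; the only way to dislodge him from a hub $u$ is for a cop to step onto $u$. The key structural fact is that in $H$ the hubs are pairwise at distance two with \emph{no} direct edge, and a single cop can block at most one of the two disjoint length-two routes between two hubs. I would argue that whenever Robert is forced off his current hub, the cop(s) doing the forcing must be on neighbours of that hub, hence are at distance two from the other two hubs; a lone remaining cop can threaten at most one of those two hubs, so Robert can always flee along a free route to a hub no cop can reach in time. Iterating, Robert evades forever, giving $\ccr(H)\ge 3$.

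For the upper bound on $G$, I would start the two cops on two hubs, say $u_2$ and $u_3$. Every vertex Robert can start on is then either the third hub $u_1$ or a degree-$2$ vertex; in the latter case at least one cop is adjacent to Robert's vertex and, using the new hub--hub edges, the two cops finish in one move. If Robert starts on $u_1$, the cops step onto $u_1$ from $u_2$ and $u_3$ along the direct edges, forcing Robert onto a degree-$2$ neighbour $z$ of $u_1$, say with $z$ adjacent to $u_1$ and some hub $u_j$. Now both cops occupy $u_1$, and the crucial difference from $H$ appears: one cop moves $u_1\to z$ (occupying Robert's vertex and traversing $zu_1$) while the other jumps $u_1\to u_j$ along the new edge, occupying the one hub Robert was heading for. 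Every response of Robert is then blocked, so he is captured. Together with $\ccr(G)\ge 2$ (as $G$ is not a tree, by Theorem~\ref{thm: ccr of trees}), this yields $\ccr(G)=2$, and hence $\ccr(H)\ge 3>2=\ccr(G)$.

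I expect the main obstacle to be the lower bound $\ccr(H)\ge 3$, since it requires a robber strategy robust against \emph{every} two-cop strategy rather than a single line of play. The cleanest route is to maintain the invariant that at the start of each of Robert's turns he occupies a hub from which at least one of the two disjoint routes to some currently unguarded hub is free, and to verify this invariant is preserved in both cases (both cops committed to dislodging Robert versus only one, which leaves the other cop able to guard at most one of the remaining two hubs). I would also check the degenerate placements and passing moves, and confirm that the extra hub--hub edges do not create a $3$-core in $G$, so that no lower bound stronger than $2$ is forced: deleting the degree-$2$ vertices leaves only the triangle on the hubs, whose vertices then have degree $2$, so the core number of $G$ is indeed $2$.
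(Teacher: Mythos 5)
Your overall architecture matches the paper's (the paper also takes $H$ to be a ``replace each edge by a $C_4$'' gadget and $G$ to be $H$ plus the original base edges), but your choice of base graph breaks the argument. The paper builds $H$ from $C_4$, not $K_3$, and this is not a cosmetic difference: the entire lower bound rests on the fact that the two hubs to which Robert might flee have \emph{no common neighbour}, so that one cop cannot simultaneously threaten both. In $\DS(C_4)$ the two hubs at distance two from Robert's hub are the two $C_4$-neighbours of that hub, which are non-adjacent in $C_4$ and hence at distance four in $\DS(C_4)$. In your $\DS(K_3)$, the two escape hubs $u_2$ and $u_3$ are adjacent in $K_3$, so they share the two subdivision vertices $x_{23}$ and $y_{23}$ as common neighbours. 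Your sentence ``a lone remaining cop can threaten at most one of those two hubs'' is therefore false for your $H$, and it is precisely the step the whole lower bound hangs on. (Compare the proof of Theorem~\ref{thm: ccr - size of largest k-core gets arbitratily large}, which explicitly invokes the no-common-neighbour property of the hypercube's neighbourhoods.)

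The failure is not merely a gap in the write-up: two cops appear to win on $\DS(K_3)$, so $\ccr(H)\ge 3$ is most likely simply wrong. Sketch of the cop win: one cop shadows Robert and eventually steps onto his hub $u_i$; the second cop first parks on a common neighbour of the other two hubs $u_j,u_k$ (she can always reach such a vertex while the first cop stalls adjacent to $u_i$, since every pair of hubs has one). When Robert is pushed off $u_i$ he must commit to a degree-two vertex $z$, and $z$ determines the unique hub $u_j$ he can reach next. On the following cop turn the pusher moves onto $z$ (sealing the edge back to $u_i$) while the blocker, already adjacent to both $u_j$ and $u_k$, steps onto $u_j$; Robert has no legal move. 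Since Robert moves after the cops, his ``cheating'' does not help: he must commit to $z$ before the blocker commits to a hub. The fix is simply to use the paper's base graph $C_4$ (or any triangle-free base in which each vertex's neighbours are pairwise non-adjacent); your upper-bound argument for $G$ then goes through essentially as you wrote it.
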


\begin{proof}
    Consider the graphs $G$ and $H$ shown in Figure \ref{fig: Supergraph counterexample with extra edges}. It is clear that $H$ is a connected subgraph of $G$. 

    Suppose Robert is playing against two cops on the graph $H$ and that he starts on a vertex of degree four. It should be noted that there are two other vertices of degree four, $v_i$ and $v_j$, that are of distance two away from Robert and it is not possible for a single cop to be of distance less than two away from both $v_i$ and $v_j$. Thus, if Robert waits on a vertex of degree four until a cop moves to that vertex then the cops cannot prevent him from moving to another vertex of degree four. So $\ccr(H) > 2$.

    Since $G$ contains a 2-core, $\ccr(G) \geq 2$ by Theorem \ref{thm: k-core lower bound on ccr}. Suppose Robert is playing against two cops, $c_1$ and $c_2$, on $G$. To win the game $c_1$ starts at $v_1$ and $c_2$ starts at $v_3$. If Robert does not start the game on either $v_2$ or $v_4$ then the cops can surround Robert on their first turn. If Robert starts on $v_2$ then the cop $c_2$ can move to $v_2$ which forces Robert to move on his next turn. If Robert then moves closer to $v_1$, he is surrounded. If Robert instead moves further away from $v_1$ then $c_1$ can move to $v_2$ and $c_2$ can move to $v_3$ to surround Robert. Similarly, if Robert starts on $v_4$ the cops can surround him in at most two moves. Therefore $\ccr(G) = 2 < \ccr(H)$.
\end{proof}

\begin{figure}
        \centering
        \begin{tikzpicture}[scale=0.75]
            \tikzstyle{vertex}=[circle, draw=black, minimum size=15pt,inner sep=0pt]

            \node[vertex] at (0,0) (v1) {};
            \node[vertex] at (-2,-1) (v2) {};
            \node[vertex] at (2,-1) (v3) {};
            \node[vertex] at (0,-2) (v4) {};

            \node[vertex] at (1,-3) (v5) {};
            \node[vertex] at (3,-3) (v6) {};
            \node[vertex] at (2,-5) (v7) {};

            \node[vertex] at (0,-4) (v8) {};
            \node[vertex] at (0,-6) (v9) {};
            \node[vertex] at (-2,-5) (v10) {};
            
            \node[vertex] at (-1,-3) (v11) {};
            \node[vertex] at (-3,-3) (v12) {};

            \node at (-3, 0) {\large{$H$}};

            \draw (v1)--(v2)--(v4)--(v3)--(v1);
            \draw (v3)--(v5)--(v7)--(v6)--(v3);
            \draw (v7)--(v8)--(v10)--(v9)--(v7);
            \draw (v10)--(v11)--(v2)--(v12)--(v10);

            \node[vertex, xshift=7cm] at (0,0) (u1) {};
            \node[vertex, xshift=7cm] at (-2,-1) (u2) {$v_1$};
            \node[vertex, xshift=7cm] at (2,-1) (u3) {$v_2$};
            \node[vertex, xshift=7cm] at (0,-2) (u4) {};

            \node[vertex, xshift=7cm] at (1,-3) (u5) {};
            \node[vertex, xshift=7cm] at (3,-3) (u6) {};
            \node[vertex, xshift=7cm] at (2,-5) (u7) {$v_3$};

            \node[vertex, xshift=7cm] at (0,-4) (u8) {};
            \node[vertex, xshift=7cm] at (0,-6) (u9) {};
            \node[vertex, xshift=7cm] at (-2,-5) (u10) {$v_4$};
            
            \node[vertex, xshift=7cm] at (-1,-3) (u11) {};
            \node[vertex, xshift=7cm] at (-3,-3) (u12) {};

            \node[xshift=7cm] at (-3, 0) {\large{$G$}};

            \draw (u1)--(u2)--(u4)--(u3)--(u1);
            \draw (u3)--(u5)--(u7)--(u6)--(u3);
            \draw (u7)--(u8)--(u10)--(u9)--(u7);
            \draw (u10)--(u11)--(u2)--(u12)--(u10);
            \draw (u2)--(u3)--(u7)--(u10)--(u2);
        \end{tikzpicture}
        \caption{A graph $G$ with a cheating robot number of two that contains a subgraph $H$ with fewer edges and a cheating robot number of at least three.}
        \label{fig: Supergraph counterexample with extra edges}
\end{figure}
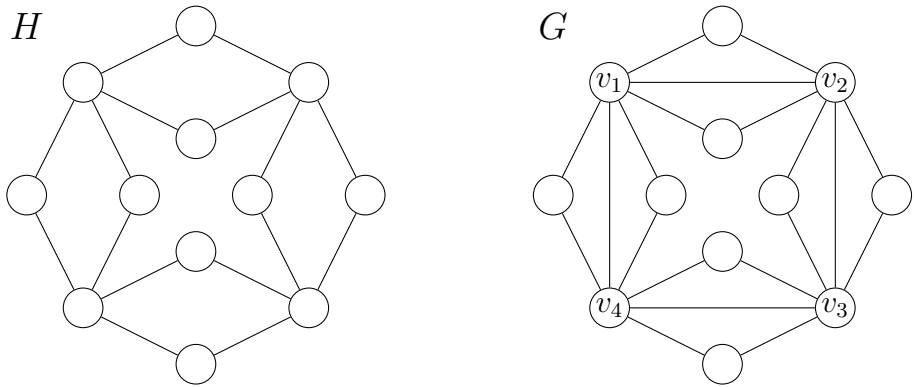

We end our technical introduction by defining the two other Cops and Robber variants that will be mentioned throughout this paper.


Surrounding Cops and Robbers, first introduced in \cite{BCCDFJP20}, is a variant of Cops and Robber with a nearly identical ruleset, with the only exception being the win condition for the cops. In Surrounding Cops and Robbers, the cops win if either they surround the robber or if the robber ends his turn on a vertex occupied by a cop. The \emph{surrounding number} of a graph $G$, denoted $\sigma(G)$, is the fewest number of cops needed to win Surrounding Cops and Robbers on $G$. For results on Surrounding Cops and Robbers and the surrounding number, we direct the reader to \cite{ADG24, BH19, BCCDFJP20, JSU23}. 


Bodyguards and Presidents is a vertex pursuit-evasion game first introduced in \cite{CDK24, K24} where a set of bodyguards is attempting to indefinitely surround a president. The bodyguards and the president alternately take turns, moving the same way as the cops and the robber do in Cops and Cheating Robot and in Surrounding Cops and Robbers. The difference with this game is the win condition for the bodyguards. If, after some finite number of turns, the bodyguards can surround the president and, by the end of every following bodyguard turn, the bodyguards surround the president then the bodyguards win. That is, the bodyguards win if they can surround the president by the end of all but finitely many bodyguard turns. The president wins otherwise. The \emph{bodyguard number} of a graph $G$, denoted $B(G)$, is the fewest number of bodyguards needed to win Bodyguards and Presidents on $G$. 

We end this section by giving results from \cite{CDK24} regarding the bodyguard number. These results will be used in Section \ref{sec: graph products}. For more on Bodyguards and Presidents, we direct the reader to \cite{CDK24, K24}. 

\begin{lemma}\label{lem: Bodyguard numbers of cycles}
    \cite{CDK24} If $n\geq 3$, then
    \begin{equation*}
        B(C_n) = \left\{
        \begin{array}{cc}
             2 & \text{if $n\leq 5$} \\
             3 & \text{if $n>5$.}
        \end{array}
        \right.
    \end{equation*}
\end{lemma}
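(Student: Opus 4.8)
The plan is to treat the trivial lower bound, the two short cases, and the long cycles separately. Since every vertex of $C_n$ has degree two, surrounding the president requires a bodyguard on each of his two neighbours, so at least two bodyguards are always needed; this gives $B(C_n)\ge 2$ for all $n\ge 3$. It therefore remains to prove that two bodyguards suffice when $n\le 5$, that three bodyguards suffice for all $n$, and that two bodyguards fail once $n\ge 6$.

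For $n\le 5$ I would exhibit an explicit two-bodyguard strategy. The idea is to send the two bodyguards toward the president from opposite arcs so as to pin him between them at distance two, which is exactly a surround, and then to maintain the surround by having each bodyguard mirror the president: whenever he steps one way the near bodyguard follows and the far one advances, so that a bodyguard always occupies each of his two neighbours. On $C_3$, $C_4$, and $C_5$ there are only finitely many initial placements up to symmetry, so I would simply verify that this pinning succeeds and is thereafter maintained in each case; the point is that on such a short cycle the president never has enough room to slip around to the far arc before the bodyguards close the gap.

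For $n\ge 6$ there are two things to show. First, three bodyguards win: two of them hold the president's current neighbours while the third supplies the slack needed to rotate the whole formation, so that after the president's move the trio can always re-establish a bodyguard on each new neighbour, giving a surround at the end of every bodyguard turn. Second, and this is the crux, two bodyguards fail. Here I would give the president an evasion strategy governed by the invariant that after every bodyguard move at least one of his two neighbours is unoccupied, i.e.\ he is never surrounded. Whenever both bodyguards threaten to close to distance one simultaneously, the president takes a safe step into the interior of the long arc, which has length at least $n-3\ge 3$ for $n\ge 6$ and hence always contains such a step; the length bound is precisely what fails for $n\le 5$. Maintaining this invariant forever shows that the president is surrounded at the end of at most finitely many bodyguard turns, so $B(C_n)>2$ and therefore $B(C_n)=3$.

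The main obstacle is the $n\ge 6$ lower bound: one must produce an invariant for the president that is genuinely preserved against every possible pair of bodyguard responses, and check that the required safe step always exists, with the boundary case $n=6$ being the delicate one. The precise bookkeeping here depends on the exact movement and interaction rules of Bodyguards and Presidents from \cite{CDK24}, in particular on how the surround is evaluated relative to the two players' moves, and it is this rule-dependent case analysis, rather than any single clever idea, that carries the proof.
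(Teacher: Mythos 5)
This lemma is quoted from \cite{CDK24} and is not proved anywhere in the present paper, so there is no in-paper argument to compare yours against; I can only assess your outline on its own terms. Your overall decomposition (degree-two lower bound of $2$, explicit two-bodyguard strategies for $n\le 5$, a three-bodyguard strategy for all $n$, and a president evasion argument showing two bodyguards fail for $n\ge 6$) is the natural one and matches the shape such a proof must take. The lower bound $B(C_n)\ge 2$ and the finite verifications for $C_3$, $C_4$, $C_5$ are fine as described.

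The genuine gap is in the crux you yourself identify: the claim that two bodyguards fail for $n\ge 6$. Your argument is that the long arc has length at least $n-3\ge 3$, ``hence always contains a safe step.'' But the mere existence of a long arc does not obstruct a surround on a cycle: the bodyguards only ever need to occupy the president's two neighbours, and a naive squeeze from both sides pins him between them regardless of $n$, after which he appears to be surrounded at the end of every subsequent bodyguard turn. Whatever actually breaks this for $n\ge 6$ --- whether the president may stand on or pass through a bodyguard's vertex to shift the pair of vertices that must be covered, which player reacts to which, and at what instant the surround is evaluated --- is exactly the rule-dependent content you defer to \cite{CDK24}. Without pinning those rules down and exhibiting an explicitly preserved invariant of the form ``at the end of this bodyguard turn some neighbour of the president is unoccupied,'' the argument does not distinguish $C_6$ from $C_5$, and the three-bodyguard upper bound (``the third supplies the slack to rotate the formation'') is likewise asserted rather than shown. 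As written, the proposal is a correct plan with the decisive step missing.
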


\begin{lemma}\label{lem: bodyguard num of tree}
    \cite{CDK24} If $G$ is a tree with $\ell$ leaves then $B(G) = \ell$. 
\end{lemma}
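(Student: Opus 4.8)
The plan is to prove the two inequalities $B(G)\le \ell$ and $B(G)\ge \ell$ separately, where $\ell$ is the number of leaves of the tree $T=G$. Throughout it is convenient to reformulate the win condition: at any moment let $R$ denote the component of $T$ minus the bodyguard-occupied vertices that contains the president. The president is surrounded exactly when every neighbour of his vertex is occupied, which in a tree happens if and only if $R$ consists of the single vertex on which he stands. Hence the bodyguards win iff they can force $|R|=1$ from some turn onward, and the president wins iff he can guarantee $|R|\ge 2$ at the end of infinitely many bodyguard turns. Small cases (paths, and the degenerate $K_2$) are checked by hand; assume $T$ has at least two internal vertices.

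For the upper bound I would give an explicit strategy for $\ell$ bodyguards resting on one combinatorial fact: for every connected subtree $S\subseteq T$, each component of $T\setminus S$ meets $S$ in exactly one vertex (its \emph{portal}) and contains at least one leaf of $T$, and distinct components contain distinct leaves; consequently the outer boundary $\partial S$ (the set of portals) satisfies $|\partial S|\le \ell$. The bodyguards begin on the $\ell$ leaves, confining the president to the subtree $S_0$ of internal vertices with $\partial S_0$ fully occupied. They then maintain the invariant ``the president lies in a connected region $S$ with every vertex of $\partial S$ occupied'' while shrinking $S$: on each turn pick a leaf $y$ of the subtree $S$ with $y\neq p$ (possible since $|S|\ge 2$ forces $S$ to have two leaves), and advance a bodyguard from a portal adjacent to $y$ onto $y$. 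One checks that this keeps $\partial S$ occupied, leaves the president unable to reach any vacated vertex (it is separated from $S$ by the newly occupied $y$), and strictly decreases $|S|$. Since a leaf of $S$ is an internal vertex of $T$ it has a neighbour outside $S$, so a portal-bodyguard is always available, and by $|\partial S|\le \ell$ no more than $\ell$ bodyguards are ever needed. When $S$ shrinks to $\{p\}$ the president is surrounded, and the bodyguards simply hold $\partial\{p\}=N(p)$ forever.

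For the lower bound I would exhibit an evasion strategy for the president against $\ell-1$ bodyguards that keeps $|R|\ge 2$ at the end of every bodyguard turn, so that he is never even temporarily surrounded once the game settles. The key reduction is local: if the president currently stands at $p$ with $|R|\ge 2$, it suffices to show he has a move to some $q\in\{p\}\cup(N(p)\cap R)$ that the bodyguards cannot ``one-step isolate,'' i.e.\ such that they cannot occupy all of $N(q)$ on their following turn. A vertex $q$ is one-step isolatable only if every neighbour of $q$ already carries a bodyguard or has one at distance one, and these assignments can be realised simultaneously (a system-of-distinct-representatives condition). The plan is to show, by a counting/Hall-type argument that charges the threatened neighbours to the $\ell$ leaf-branches of $T$, that $\ell-1$ bodyguards cannot simultaneously threaten to isolate every candidate vertex available to the president; he moves to a safe $q$, and repeats forever.

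The main obstacle is this last step. Proving the Hall/counting condition requires pinning down exactly how the tree's leaf structure limits what $\ell-1$ guards can threaten in a single move, and handling the timing subtlety that the guards may reposition freely between the president's successive escapes, so that a naive ``some branch is always guard-free'' argument is not by itself conclusive. I expect the cleanest route is to track the potential equal to the number of leaves of $T$ lying in the president's free region $R$, together with his distance to the nearest guard along each branch, and to show the president can always choose a move that prevents the guards from driving this potential to the isolated configuration; formalising the invariant and its preservation under all bodyguard responses is where the real work lies.
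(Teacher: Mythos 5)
The paper does not actually prove this lemma; it is quoted from \cite{CDK24}, so there is no in-paper argument to compare yours against, and I can only assess the proposal on its own terms. On those terms the central problem is a rules assumption that your entire framework rests on: both the upper-bound cordon and the lower-bound ``free region $R$'' presuppose that the president cannot end his turn on a bodyguard-occupied vertex. In Bodyguards and Presidents the \emph{only} losing condition for the president is being surrounded at the end of all but finitely many bodyguard turns; unlike Surrounding Cops and Robbers there is no capture-by-contact, so the president may step onto, and hence walk past, the bodyguards. This is not a matter of taste --- it is forced by the statement you are proving. If the president were blocked by occupied vertices, then on the tree with adjacent degree-three vertices $u,v$, leaves $a,b$ at $u$ and $c,d$ at $v$, three guards parked once and for all on $\{a,b,v\}$ leave every unoccupied vertex with its entire neighbourhood occupied, giving $B=3<4=\ell$; likewise a single guard herding a blocked president to the end of a path gives $B(P_n)=1<2$. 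Under the actual rule your shrinking step collapses: when a guard advances from the portal onto the leaf $y$ of $S$, the president simply moves onto $y$ and then out into the vacated branch behind it, where he is not surrounded --- and he only needs to be unsurrounded infinitely often to win. A correct $\ell$-guard strategy must keep a guard permanently committed to each leaf-branch precisely so that passing through the cordon gains the president nothing; that is also the structural reason the answer is the number of leaves rather than the number of portals of some subtree or the maximum degree.

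The lower bound is a separate, unambiguous gap: you state yourself that the Hall-type counting step ``is where the real work lies'' and leave both the invariant and its preservation under arbitrary bodyguard repositioning unproved, so the inequality $B(G)\ge \ell$ --- which is the substantive half of the lemma --- is not established. Moreover that sketch is again phrased in terms of the component $R$ of $T$ minus the occupied vertices, which is not the right object once the president may stand on occupied vertices. Finally, note that as stated the lemma fails for $K_2$ (one guard standing on the president's unique neighbour surrounds him forever, while $\ell=2$), so your ``degenerate case checked by hand'' needs to become an explicit exclusion rather than a verification.
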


\begin{lemma}\label{lem: Bodyguard number of strong grid}
    \cite{CDK24} If $k, n_1, \dots, n_k \in \mathbb{Z}^+$ then $B(\boxtimes^k_{i=1} P_{n_i}) = \frac{3^k - 1}{2}$. 
\end{lemma}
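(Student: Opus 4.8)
The plan is to prove matching bounds, $B(\boxtimes_{i=1}^k P_{n_i}) \geq \frac{3^k-1}{2}$ and $B(\boxtimes_{i=1}^k P_{n_i}) \leq \frac{3^k-1}{2}$, organizing both around the observation that $\frac{3^k-1}{2} = \sum_{i=0}^{k-1} 3^i$ satisfies the recurrence $b_k = 3b_{k-1}+1$ with $b_1 = 1$. This recurrence is what would drive an induction on the dimension $k$: I would slice the grid $G_k := \boxtimes_{i=1}^k P_{n_i}$ into the $n_k$ layers $L_t = \{x : x_k = t\}$, each isomorphic to $G_{k-1} = \boxtimes_{i=1}^{k-1} P_{n_i}$, with consecutive layers joined by all king-step edges. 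The base case $k=1$ is the path $P_{n_1}$, which is the one-dimensional instance and would be handled directly.

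For the upper bound I would show that $b_k = 3b_{k-1}+1$ bodyguards suffice by lifting a winning $(k-1)$-dimensional strategy. The idea is to herd the president along the $k$-th axis into a boundary layer, say $L_1$, and then surround it there by running the inductive strategy inside $L_1$. Since the president can leave a layer by a king step into an adjacent one, the $b_{k-1}$ bodyguards executing the lower-dimensional strategy in the active layer must be shadowed by copies in the two neighbouring layers that project the same moves; this accounts for the factor $3b_{k-1}$. The single extra bodyguard advances the confining "floor" in the $k$-th coordinate, guaranteeing that the feasible region loses a layer each time the president is pinned against it, so that after finitely many rounds the president is trapped in $L_1$ and the inductive surround completes. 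A secondary potential counting the number of live layers would certify this monotone progress.

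For the lower bound I would exhibit an evasion strategy for the president against any $\frac{3^k-1}{2}-1$ bodyguards. The key structural fact is that the $3^k-1$ king-directions split into $\frac{3^k-1}{2}$ antipodal pairs $\{+\delta,-\delta\}$. I would have the president maintain the invariant that, at the end of each of its turns, it sits at a vertex $v$ for which some antipodal pair of escape cells $\{v+\delta, v-\delta\}$ cannot both be sealed off by the bodyguards on their next move; since there are strictly fewer than $\frac{3^k-1}{2}$ bodyguards, a pigeonhole argument over the pairs shows such a free pair always exists, and retreating toward the interior along it preserves the invariant. An $\ell_\infty$ distance-to-boundary potential would be used to certify that the president keeps this distance away from the values at which it could be pinned, so it is never surrounded.

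The main obstacle I anticipate is maintaining the shadowing configuration in the upper bound: at each round one must reassign the $3b_{k-1}$ bodyguards to their new target cells so that every bodyguard moves at most one king step, despite the president's diagonal moves and the irregularities at the grid boundary. I would handle this with a Hall-type matching argument that exploits the fact that a king step changes each coordinate by at most one, so the shadowed formation can translate in lockstep with the president; boundary cases, where part of the formation would fall outside the grid, only free up bodyguards and thus make the reassignment easier rather than harder. Making the induction's extra bodyguard enforce strictly monotone progress in the $k$-th coordinate, rather than allowing the president to stall, is the second delicate point and is where I expect the argument to require the most care.
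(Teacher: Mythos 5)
This lemma is quoted from \cite{CDK24}; the paper itself contains no proof of it, so there is nothing internal to compare your argument against, and I can only assess the proposal on its own terms. Unfortunately it targets a value that cannot be correct as printed. For $k=1$ the formula gives $B(P_{n_1})=1$, contradicting Lemma \ref{lem: bodyguard num of tree} ($B(P_n)=2$ for $n\ge 2$, since a path has two leaves); and in the proof of Theorem \ref{thm: ccr of k-D strong grids} the quantity $B(\boxtimes_{i=1}^{k-1}P_{n_i})+1$ is replaced by $3^{k-1}$, i.e.\ the authors there use $B(\boxtimes_{i=1}^{k-1}P_{n_i})=3^{k-1}-1$. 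The correct value (for all $n_i\ge 3$) is $3^k-1$: Bodyguards and Presidents has no capture or pushing mechanism --- the only win condition is eternal surrounding --- so the president may simply sit forever on an interior vertex of degree $3^k-1$, and surrounding him there requires that many bodyguards. The statement as printed appears to have absorbed the formula $\frac{3^k-1}{2}$ from Theorem \ref{thm: ccr of k-D strong grids}, which bounds the cheating robot number, a different parameter.

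Consequently both halves of your plan break down. Your base case $b_1=1$ is false, and the intended recurrence should be $b_k=3b_{k-1}+2$ (three shadow copies of the $(k-1)$-dimensional formation, in the president's layer and the two adjacent layers, plus two herders along the $k$-th axis), whose solution from $b_1=2$ is $3^k-1$. Your lower bound is flawed independently of the target value: the pigeonhole over the $\frac{3^k-1}{2}$ antipodal pairs of king directions requires that each bodyguard can seal cells in at most one pair, but a bodyguard adjacent to the president's vertex $v$ can reach up to $2\cdot 3^{k-1}$ of the $3^k-1$ neighbours of $v$ in a single king step, hitting many antipodal pairs at once. It is also aimed at the wrong threshold: since being surrounded means \emph{every} neighbour is occupied, the trivial stay-put strategy already certifies the much larger lower bound $3^k-1$, and no pairing argument is needed.
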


\section{The Push Number}\label{sec: push num}
In this section we define a new parameter that is based on how the cops capture Robert. A \emph{winning strategy} for $k$ cops on a graph $G$ is a pair $(X,A)$ where $X\subseteq V(G)$ is the set of vertices the $k$ cops start the game on and $A$ is an algorithm for the cops' moves such that no matter how Robert moves he is eventually captured.

In order for the cops to capture Robert, they may need to force Robert into a position that is less advantageous for him. To force Robert to move, a cop can move onto the vertex Robert is on. Since ending his turn on a vertex occupied by a cop loses him the game, Robert has no choice but to move to a new vertex. If a cop moves onto Robert's vertex such that Robert does not lose the game by the end of his next turn, then we will say that the cop has \emph{pushed} Robert. Let $G$ be a graph. For a given winning strategy $S$ for $\ccr(G)$ cops, let $\push(G,S)$ denote the maximum number of distinct cops that push Robert over all possible ways Robert could play the game. We define the \emph{push number} of $G$, denoted $\push(G)$, to be the minimum $\push(G,S)$ over all winning strategies for the cops. 

Note that in this definition, we do not care about how many times Robert gets pushed, but only about how many cops push Robert. 

For example, consider Cops and Cheating Robot on the path $P_5$ with $\ccr(P_5) = 1$ cop. If Robert starts on a leaf and stays there, then when the cop moves to the leaf occupied by Robert, every move Robert makes on his next turn will result in him losing. This does not count as a push since a push can only occur if Robert is able to move without losing by the end of his turn. Suppose instead Robert starts on a vertex that is not a leaf and refuses to move off his vertex until a cop pushes him. Since one cop cannot surround Robert from his starting position, she has no choice but to push him. Therefore $\push(P_5) = 1$. Notably, it may occur that the cop pushes Robert more than once, but this does not change the value of the push number. 

Since the push number is always measured when playing with $\ccr(G)$ cops, the number of cops required to push Robert in order to capture him is at most all of the cops in play.

\begin{theorem}\label{thm: push num trivial bd}
    If $G$ is a graph, then $\push(G) \leq \ccr(G)$.
\end{theorem}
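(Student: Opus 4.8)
The plan is to unpack the definition of the push number directly, since the statement is essentially definitional (as the label ``trivial bound'' suggests). Recall that $\push(G) = \min_S \push(G,S)$, where the minimum ranges over all winning strategies $S$ for $\ccr(G)$ cops, and $\push(G,S)$ is the maximum, over all of Robert's possible plays against $S$, of the number of distinct cops that push Robert. Because $\push(G)$ is a minimum over a nonempty collection (a winning strategy for $\ccr(G)$ cops exists by the definition of the cheating robot number), it suffices to bound $\push(G,S) \leq \ccr(G)$ for an arbitrary winning strategy $S$, and then conclude by taking the minimum.

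First I would fix an arbitrary winning strategy $S = (X,A)$ for $\ccr(G)$ cops. The key observation is that in any single play of the game against $S$, the set of cops that push Robert is a subset of the full set of cops on the graph, and that full set has cardinality exactly $\ccr(G)$. Hence the number of \emph{distinct} cops that push Robert in that play is at most $\ccr(G)$. Since this holds for every play, taking the maximum over all of Robert's plays yields $\push(G,S) \leq \ccr(G)$.

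Finally, since the bound $\push(G,S) \leq \ccr(G)$ holds for every winning strategy $S$, it holds in particular for a strategy attaining the minimum, giving $\push(G) = \min_S \push(G,S) \leq \ccr(G)$. There is no genuine obstacle here; the only point requiring a moment's care is that the inequality survives both quantifier operations — the maximum over Robert's plays and the minimum over strategies — which it does precisely because each individual play can only involve pushes from the fixed pool of $\ccr(G)$ cops.
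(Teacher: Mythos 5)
Your proof is correct and matches the paper's reasoning exactly: the paper justifies this bound with a single remark that the pushing cops form a subset of the $\ccr(G)$ cops in play, which is precisely the observation you formalize. Your careful tracking of the max-over-plays and min-over-strategies quantifiers is a fine (if slightly more verbose) rendering of the same idea.
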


The following result is an example of an infinite family of graphs where the cheating robot number of cops can win without pushing Robert.

\begin{theorem}\label{thm: push num of cycles is 0}
    If $n\geq 3$ then $\push(C_n) = 0$.
\end{theorem}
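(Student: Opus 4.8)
The plan is to exhibit a single winning strategy for two cops on $C_n$ that never pushes Robert, and to observe that two is the correct number of cops. For the count, the whole cycle is a $2$-core, so Theorem~\ref{thm: k-core lower bound on ccr} gives $\ccr(C_n)\ge 2$; the pincer strategy described below is a winning strategy for two cops, so in fact $\ccr(C_n)=2$, and it remains only to check that this strategy produces no pushes. Since $\push(C_n)$ is the minimum of $\push(C_n,S)$ over winning strategies $S$ and is nonnegative, a single push-free winning strategy using $\ccr(C_n)=2$ cops yields $\push(C_n)=0$.

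First I would set up the strategy. Place the two cops on a pair of adjacent vertices; once Robert chooses his starting vertex he necessarily lies in the interior of the long arc joining the two cops, with one cop at each end of that arc. Let $D$ denote the length of this arc, that is, the distance between the two cops measured through Robert. Robert is confined to the arc, since neither endpoint can be crossed without either stepping onto a cop (an immediate loss) or retracing an edge a cop has just traversed. The cops then walk inward toward Robert from both ends, and I would track $D$ as a potential: on any turn in which both cops step inward, $D$ decreases by exactly $2$ regardless of Robert's reaction, because any inward move Robert makes on one side is compensated by the advancing cop on the other.

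The refinement that makes the strategy push-free is the rule governing a cop that has become adjacent to Robert: such a cop never steps onto Robert's vertex but instead waits while the other cop continues to close in, and one checks that $D$ still strictly decreases on such turns (Robert can only stay, keeping a gap of $1$ on that side, or slide toward the advancing cop, each of which lowers $D$ by one). Iterating, $D$ decreases until it reaches its minimum value $2$, which, because both cop-to-Robert distances are kept at least $1$ throughout, forces the flanking configuration in which a cop occupies each neighbour of Robert. On the final turn both cops step simultaneously onto Robert's vertex; every option then available to Robert loses, since staying ends his turn on a cop and moving to either neighbour retraces the edge that cop has just traversed. Because this terminal move captures Robert rather than leaving him a safe reply, it is by definition not a push, and no earlier move ever placed a cop on Robert's vertex, so no push occurs.

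The main obstacle is the bookkeeping behind the last two paragraphs: since Robert is a cheating robot he reacts to the cops' announced moves and may repeatedly switch which side he hugs, so I must verify that the waiting rule can never be forced into a configuration where the potential fails to drop, nor into one where a cop is compelled onto Robert's vertex while he still has a safe escape (which would count as a push). I would handle this with a short case analysis on the pair of cop-to-Robert distances $(d_1,d_2)$ with $d_1+d_2=D$, treating the generic case $d_1,d_2\ge 2$ and the boundary case $\min\{d_1,d_2\}=1$ separately, and finally dispatching the small cycles $C_3,C_4,C_5$ as base cases, where a suitable initial placement already flanks Robert.
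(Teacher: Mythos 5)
Your proposal is correct and follows essentially the same strategy as the paper: two cops start on adjacent vertices, close in on Robert from both sides along the long arc, with a cop adjacent to Robert waiting rather than stepping onto his vertex, until Robert is surrounded and then captured with no push. Your potential-function bookkeeping and the explicit verification that the final simultaneous capture is not a push are just a more detailed write-up of the same argument.
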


\begin{proof}
    Since $\ccr(C_n) = 2$, suppose Robert is playing against two cops, $c_1$ and $c_2$, on $C_n$ with vertices $v_0, v_1, \dots, v_{n-1}$ where $v_i$ is adjacent to $v_{i+1}$ and the addition is modulo $n$. The cops will begin by placing $c_1$ on $v_0$ and $c_2$ on $v_1$. The cops will move as follows. If $c_1$ is on the vertex $v_i$ and Robert is not on $v_{i-1}$, then $c_1$ will move to $v_{i-1}$. If $c_2$ is on the vertex $v_j$ and Robert is not on $v_{j+1}$, then $c_2$ will move to $v_{j+1}$. By using this strategy, Robert will be surrounded in finitely turns without any pushes. Once he is surrounded, the cops can capture him to end the game without any pushes.
\end{proof}

One simple way for the cops to win without pushing Robert is if they can win extremely quickly. 

\begin{lemma}\label{lem: one move win implies no push}
    Let $G$ be a graph. If $\ccr(G)$ cops can capture Robert in one move on $G$, then $\push(G) = 0$.
\end{lemma}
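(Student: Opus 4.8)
The plan is to use the hypothesised one-move winning strategy itself as the witness that forces the push number to zero. Concretely, let $S^*$ be a winning strategy for $\ccr(G)$ cops that captures Robert in one move; that is, after the cops occupy their starting set, Robert places himself, and the cops make a single move according to $S^*$, Robert loses no matter how he responds on his following turn. Since $\push(G) = \min_S \push(G,S)$ and the push number is a nonnegative count of cops, it suffices to exhibit one strategy achieving zero pushes, and I would take $S^* $ to be that strategy, reducing the lemma to showing $\push(G, S^*) = 0$.

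First I would recall the exact definition of a push: a cop pushes Robert only if that cop moves onto Robert's vertex \emph{and} Robert does not lose by the end of his next turn. Then I would fix an arbitrary play of Robert against $S^*$. Because $S^*$ wins in one move, the only cop move that ever occurs in this play is the first one, and immediately after it Robert is doomed, in the sense that every option available to him on his following turn (moving to a neighbour or staying put) loses the game. In particular, the maximisation over all of Robert's plays that defines $\push(G,S^*)$ collapses, since in every such play the game terminates after this single cop move.

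The key step is to rule out a push on this one cop move, and here I would split into two cases. If no cop moves onto Robert's vertex during the move, then no push occurs trivially, since a push requires a cop to land on Robert's vertex. If instead some cop does move onto Robert's vertex, then, because the move captures Robert, he has no safe response and therefore \emph{does} lose by the end of his next turn; by the definition of a push, such a move is a capture and not a push. In either case the single cop move contributes no push, so the number of distinct cops that push Robert in this play is zero.

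Since this holds for every play of Robert, I conclude $\push(G, S^*) = 0$, whence $\push(G) \le \push(G, S^*) = 0$ and therefore $\push(G) = 0$. I expect the only delicate point to be the bookkeeping around the definition of a push: the argument hinges on the fact that "push" explicitly excludes the case in which Robert cannot survive, which is exactly the situation a one-move capture guarantees. Beyond carefully invoking this definitional distinction — and noting that the game's termination after a single cop move makes the quantifier over Robert's plays immediate — I do not anticipate any genuine obstacle.
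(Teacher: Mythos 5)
Your proof is correct and takes essentially the same approach as the paper, which simply observes in one line that no pushes occur when Robert is captured in one move. Your more careful unpacking — that a cop landing on Robert's vertex in a one-move capture fails the definition of a push precisely because Robert cannot survive his next turn — is exactly the definitional point the paper's terse proof relies on.
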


\begin{proof}
    No pushes occur if Robert is captured in one move.
\end{proof}

Theorem \ref{thm: push num of cycles is 0} gives infinitely many examples where the converse of Lemma \ref{lem: one move win implies no push} holds. However, we can use Lemma \ref{lem: one move win implies no push} to obtain the push number for complete $k$-partite graphs.

\begin{theorem}
    If $n, n_1, n_2, \dots, n_k \geq 2$, then $\push(K_n) = 0$ and $\push(K_{n_1, \dots, n_k}) = 0$.
\end{theorem}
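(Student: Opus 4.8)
The plan is to invoke Lemma \ref{lem: one move win implies no push}: it suffices to exhibit, for each graph, a placement of $\ccr(G)$ cops from which Robert is captured after a single cop move, regardless of his (legal) starting vertex. The one subtlety I must respect is the turn-based win condition, namely that Robert also loses if he traverses an edge a cop has just traversed; this is exactly what prevents Robert from escaping by ``swapping'' with a cop who pushes onto his vertex.

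First consider $K_n$. Since $K_n$ is its own $(n-1)$-core, Theorem \ref{thm: k-core lower bound on ccr} gives $\ccr(K_n) \geq n-1$. I would place $n-1$ cops on $n-1$ of the vertices; Robert is then forced to start on the unique remaining vertex $v$, and every neighbour of $v$ (all other $n-1$ vertices) is occupied. On the cops' first move, one cop on some vertex $u$ moves onto $v$ while the rest stay put. Now Robert must leave $v$: moving to $u$ re-traverses the edge $uv$ just used by that cop, and every other vertex still carries a cop, so Robert is captured. Hence $n-1$ cops win in one move, so $\ccr(K_n) = n-1$ and, by Lemma \ref{lem: one move win implies no push}, $\push(K_n) = 0$.

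The complete multipartite case is handled the same way once the right placement is identified. Write $N = n_1 + \cdots + n_k$ and let part $1$ be a part of maximum size $n_{\max} = \max_i n_i$. Every vertex has degree at least $N - n_{\max}$, so $K_{n_1,\dots,n_k}$ contains an $(N-n_{\max})$-core and $\ccr \geq N - n_{\max}$ by Theorem \ref{thm: k-core lower bound on ccr}. I would station the $N - n_{\max}$ cops on every vertex outside part $1$. Since each $n_i \geq 2$, part $1$ still has free vertices, and Robert is forced to start there; his neighbourhood is precisely the set of occupied vertices, so he is surrounded. The identical one-move push-and-capture argument as for $K_n$ (one cop steps onto Robert, the edge rule blocks the swap, every other neighbour is guarded) finishes him off. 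Thus $\ccr(K_{n_1,\dots,n_k}) = N - n_{\max}$, and Lemma \ref{lem: one move win implies no push} yields $\push(K_{n_1,\dots,n_k}) = 0$.

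The only place that needs genuine care, and hence the main obstacle, is confirming that the strategies above really use exactly $\ccr(G)$ cops: the push number is only defined with respect to a cheating-robot number of cops, so I must pair each one-move capture with the matching $k$-core lower bound in order to pin down $\ccr$ precisely. Everything else reduces to checking the turn-based rules, where the edge-traversal clause does all the work.
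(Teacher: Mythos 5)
Your proposal is correct and follows essentially the same route as the paper: both establish $\ccr(K_n)=n-1$ and $\ccr(K_{n_1,\dots,n_k})=\sum_{i} n_i - \max_i n_i$ via the $k$-core lower bound, place the cops on all vertices outside a maximum independent part so Robert starts surrounded, and conclude via Lemma \ref{lem: one move win implies no push}. Your extra discussion of the edge-traversal rule is a harmless elaboration of what the paper summarizes as a one-move win.
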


\begin{proof}
    By Lemma \ref{lem: one move win implies no push}, it suffices to show that $\ccr(K_n)$ and $\ccr(K_{m,n})$ cops can win in one move on $K_n$ and $K_{m,n}$ respectively. 

    It is clear that for any graph $G$ on $n$ vertices, $\ccr(G) \leq n-1$. Thus $\ccr(K_n) = n-1$ by Theorem \ref{thm: k-core lower bound on ccr}. Against $n-1$ cops on $K_n$, Robert has no choice but to place himself on a vertex that is already surrounded by cops. Therefore, the cops win in one move.

    Without loss of generality, assume $n_1 \leq n_2 \leq \cdots \leq n_k$ and let $t = \sum_{i=1}^{k-1} n_i$. Since $K_{n_1, \dots, n_k}$ contains a $t$-core, $\ccr(K_{n_1, \dots, n_k}) \geq t$ by Theorem \ref{thm: k-core lower bound on ccr}. For each $1\leq i\leq k$, let $X_i$ be the unique, maximal independent set of vertices in $K_{n_1, \dots, n_k}$ of size $n_i$. The cops can win by placing one cop on each vertex not in $X_{n_k}$. This forces Robert to place himself on a vertex in $X_{n_k}$. However, every vertex in $X_{n_k}$ is surrounded by cops and so the cops can win in one move. 
\end{proof}

Lemma \ref{lem: one move win implies no push} gives a condition that guarantees the push number is zero. The following theorem gives a condition that guarantees the push number is nonzero. 

\begin{theorem}\label{thm: many vertices of high degree implies nonzero push number}
    If $G$ is a graph with at least $\ccr(G) + 1$ vertices of degree at least $\ccr(G) + 1$ then $\push(G) \geq 1$. 
\end{theorem}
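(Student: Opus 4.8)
The plan is to show that under the degree hypothesis, no winning strategy for $\ccr(G)$ cops can capture Robert without at least one cop pushing him; equivalently, a strategy using zero pushes cannot be winning. So I would argue by contradiction: suppose $S$ is a winning strategy for $k := \ccr(G)$ cops with $\push(G,S) = 0$, meaning that over every play, no cop ever pushes Robert. The key observation is that a ``no-push'' strategy must capture Robert purely by surrounding him or by catching him on a vertex from which he has no safe move — the cops are never allowed to step onto Robert's vertex in a way that leaves him a legal continuation. I would then exploit the abundance of high-degree vertices to let Robert dodge indefinitely, contradicting that $S$ wins.

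The central idea is a counting argument at the moment of capture. Let $H$ be the set of vertices of degree at least $k+1$; by hypothesis $|H| \geq k+1$. First I would have Robert choose an initial vertex in $H$ (possible since there are at least $k+1$ such vertices and only $k$ cops, so some vertex of $H$ is unoccupied — in fact Robert only needs one such vertex free of cops). The heart of the argument is the following local claim: if Robert sits on a vertex $v$ of degree at least $k+1$ and it is not the case that a cop is about to push him, then Robert is safe for another turn. Concretely, suppose the cops are one move away from surrounding Robert at $v$. Since $\deg(v) \geq k+1 > k$, the $k$ cops cannot occupy all neighbours of $v$, so at least one neighbour $u$ is not a cop position and not reachable by a cop in the coming move along the edge $vu$; because the strategy uses no pushes, no cop moves onto $v$ itself either. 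Hence Robert always has an escape move available, and the cops can never close the net. The main step is to make this ``escape'' genuinely repeatable: I would show that Robert can always move to another vertex of $H$, or at least maintain the invariant that he occupies a vertex of degree at least $k+1$ with a free neighbour, so that the argument iterates forever.

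The step I expect to be the main obstacle is precisely this iteration — ensuring Robert can perpetually relocate to high-degree vertices rather than being herded into a region where his only escapes lead to low-degree dead ends. The clean way around this is to lean on the no-push hypothesis directly: since $S$ never pushes, capture can only occur by surrounding (all neighbours of Robert's vertex simultaneously occupied) or by forcing Robert onto a cop vertex, and in either terminal configuration Robert's current vertex must have all its neighbours blocked. But whenever Robert is on a vertex of $H$, its degree exceeds $k$, so the $k$ cops cannot block all neighbours, and no push is permitted to dislodge him. Thus it suffices to show Robert can keep himself on $H$: I would argue that since he is never pushed, he is never forced to vacate a vertex of $H$ prematurely, and he can simply remain on his chosen vertex of $H$ forever, never being surrounded. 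This contradicts that $S$ is winning, so $\push(G,S) \geq 1$ for every winning strategy $S$, giving $\push(G) \geq 1$.
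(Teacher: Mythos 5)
Your argument is correct and is essentially the paper's proof (stated there in one line): Robert starts on an unoccupied vertex of degree at least $\ccr(G)+1$, which exists since there are at least $\ccr(G)+1$ such vertices and only $\ccr(G)$ cops, and simply waits there, where he can neither be surrounded nor captured without a push because the cops cannot block all of his neighbours. The middle paragraph's concern about repeatedly relocating to high-degree vertices is a red herring, but you correctly discard it in favour of the stationary argument at the end.
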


\begin{proof}
    Robert can begin the game on a vertex of degree at least $\ccr(G) + 1$ and then wait until a cop is forced to push him. 
\end{proof}

\begin{figure}
    \centering
    \begin{tikzpicture}
        \tikzstyle{vertex}=[circle, draw=black, minimum size=20pt,inner sep=0pt]

        \node[vertex] at (0,0) (v1) {};
        \node[vertex] at (1.5,0) (v2) {};
        \node[vertex] at (3,0) (v3) {};
        \node[vertex] at (3.75,1) (v4) {};
        \node[vertex] at (4.5,0) (v5) {};
        \node[vertex] at (6,0) (v6) {};
        \node[vertex] at (7.5,0) (v7) {};

        \node[yshift = -0.75cm] at (3,0) {$x$};
        \node[yshift = -0.75cm] at (4.5,0) {$y$};

        \draw[thick] (v1)--(v2)--(v3)--(v4)--(v5)--(v6)--(v7);
        \draw[thick] (v3)--(v5);
    \end{tikzpicture}
    \caption{A graph illustrating that the converse of Theorem \ref{thm: many vertices of high degree implies nonzero push number} does not hold.}
    \label{fig: few vertices of high deg doesnt imply low push number}
\end{figure}
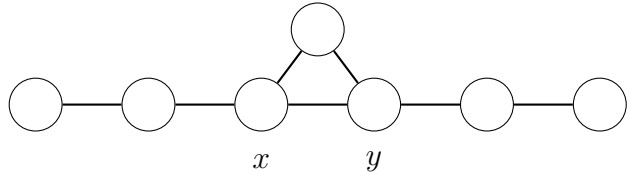

Note that there exist examples of graphs for which the converse of Theorem \ref{thm: many vertices of high degree implies nonzero push number} is not true. Let $G$ be the graph in Figure \ref{fig: few vertices of high deg doesnt imply low push number}. Since $G$ is not a tree, $\ccr(G) \geq 2$. Two cops can win by beginning on the vertices $x$ and $y$ and then taking the shortest path to Robert's vertex until he is captured. So $\ccr(G) = 2$. We claim that regardless of where the cops start, one of them is forced to push Robert. If Robert is able to start on either $x$ or $y$, then the cops cannot surround Robert without pushing him since $\text{deg}(x) = \text{deg}(y) = 3$. If the two cops start on $x$ and $y$, then Robert can start on the vertex that is adjacent to $x$ but not adjacent to $y$. From here, the only way for the cops to win is to push Robert onto the pendent. Therefore $\push(G) > 0$. 

In \cite{HN21}, it is mentioned that for any graph $G$, $\ccr(G) \leq \sigma(G)$. For a proof of this, we direct the reader to \cite{K24}. The push number gives a new way to bound the surrounding number in terms of the cheating robot number.

\begin{theorem}\label{thm: bounds on surr. num in terms of ccr and push num}
    If $G$ is a graph, then $\ccr(G) \leq \sigma(G) \leq \ccr(G) + \push(G).$
\end{theorem}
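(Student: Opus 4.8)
The left inequality $\ccr(G)\le\sigma(G)$ is already established (\cite{HN21,K24}), so the plan is to prove the right inequality $\sigma(G)\le\ccr(G)+\push(G)$. Let $k=\ccr(G)$ and $p=\push(G)$, and fix a winning Cops and Cheating Robot strategy $S$ on $k$ cops for which $\push(G,S)=p$. I would use $S$ to build a winning Surrounding Cops and Robbers strategy on $k+p$ cops: $k$ \emph{primary} cops that replay $S$ move for move, together with $p$ \emph{shadow} cops whose only job is to correct for the single rule that differs between the two games.

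The guiding observation is that the two games differ only in the capture condition: in Cops and Cheating Robot the robber loses if he traverses an edge a cop has just traversed, while in Surrounding Cops and Robbers he does not. Now the only edges incident to Robert's vertex $r$ that a cop can traverse on a given turn are those along which a cop moves \emph{onto} $r$ (no cop occupies $r$ at the start of the turn), and such a move is exactly a push whenever Robert survives it. Hence the only moves available to the Surrounding robber that are forbidden to the cheating robot are the ``swaps back'' onto a vertex $w$ that a pushing cop has just vacated. I would therefore assign a shadow to each primary that ever pushes and have it occupy $w$ at the instant of the push, so the swap-back is blocked and Robert is confined to cheating-robot-legal moves. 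Since at most $p$ distinct primaries push over any play of $S$, at most $p$ shadows are ever required; this is the source of the additive $\push(G)$ term.

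With every swap-back blocked, each play in the Surrounding game is a legal play for the cheating robot, so the primaries running $S$ drive the position to a configuration in which $S$ captures: a cop on $r$, and every neighbour of $r$ either occupied or the source of a cop that has just dashed onto $r$. Here I would exploit a small asymmetry to finish without any further shadow: a cop that dashes from $w$ onto $r$ sits on the neighbour $w$ immediately beforehand, so in the Surrounding game I simply have each such cop \emph{stay} on $w$ rather than dash. The remaining neighbours are occupied by exactly the cops $S$ places there, so all of $r$'s neighbours are simultaneously occupied and Robert is surrounded, using only the $k$ primaries at the final step.

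The step I expect to be the main obstacle is the bookkeeping for the shadows: making precise how a shadow is brought into position on the vacated neighbour at a primary's \emph{first} push (so that it is co-located with, or trailing, its primary at the right instant), how it trails through a run of consecutive pushes, and how shadows are released and re-used so that the number in play never exceeds the $p$ distinct pushers guaranteed by the choice of $S$. Verifying in full generality that swaps-back are the \emph{only} extra moves the Surrounding robber has, and checking that the one-step override to a surround is always a legal cop move, are the remaining points; these should be routine once the shadow coordination is pinned down.
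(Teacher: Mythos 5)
Your approach is essentially the paper's: run a $\ccr(G)$-cop cheating-robot strategy $S$ unchanged, and add $\push(G)$ auxiliary cops whose only job is to block the swap-back moves that are the sole difference between the two rulesets. The bookkeeping you flag as the main obstacle is resolved in the paper by a device simpler than on-demand positioning: each shadow $d_i$ starts on the same vertex as its primary $c_i$ and on every turn moves to the vertex $c_i$ occupied on the previous turn, so it permanently trails one step behind and is automatically sitting on the vacated neighbour at the instant of any push, with no triggering, releasing, or re-use to coordinate. Your two remaining verification points go through: since no cop begins a turn on Robert's vertex, the only cop-traversed edge incident to Robert's vertex is the one used by a cop moving onto it, so swap-backs after a push are indeed the only extra moves the Surrounding robber gains; and your idea of having the final dashing cops stay on their neighbours to convert the cheating-robot capture into a literal surround is correct, and is in fact handled more explicitly than in the paper, which simply asserts that the augmented team surrounds the robber with the same strategy. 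One caveat, shared with the paper's write-up: assigning shadows to a fixed set of $\push(G)$ primaries at the start of the game presumes that the set of cops that could ever push under $S$ (taken over all of Robert's plays, not just the maximum on a single play) has size at most $\push(G)$; this is how the definition of $\push(G,S)$ must be read for either version of the argument to close.
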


\begin{proof}
    For convenience, let $k=\ccr(G)$. Since any winning strategy in Surrounding Cops and Robbers is a winning strategy in Cops and Cheating Robot, $\sigma(G) \geq k$. Fix a winning strategy $S$ for the cops in Cops and Cheating Robot such that regardless of how Robert plays, at most $\push(G)$ different cops push Robert before surrounding him.

    We describe a winning strategy for $k + \push(G)$ cops in Surrounding Cops and Robbers as follows. We will label the cops $c_1$, $c_2$, $\dots$, $c_k$, $d_1$, $d_2$, $\dots$, $d_{\push(G)}$. To start the game, the cops $c_1$, $c_2$, $\dots$, $c_k$ will place themselves on the vertices used in the strategy $S$. These cops will play using strategy $S$. Without loss of generality assume that the cops that need to push Robert in order to execute the strategy are the cops $c_1$, $c_2$, $\dots$, $c_{\push(G)}$. For every $1\leq i\leq \push(G)$, the cop $d_i$ will start at the same vertex as $c_i$ and will always move to the vertex occupied by $c_i$ on the previous turn. With this strategy, the robber is unable to traverse an edge previously traversed by $c_1$, $c_2$, $\dots$, $c_{\push(G)}$. Since by assumption these $k$ cops are using a strategy that surrounds Robert in Cops and Cheating Robot, the $k + \push(G)$ cops are able to surround the robber using the same strategy. 
\end{proof}


There exist graphs where the upper bound in Theorem \ref{thm: bounds on surr. num in terms of ccr and push num} is not tight. Let $G$ be the graph in Figure \ref{fig: ccr = sigma(G) but push > 0}. Two cops, $c_1$ and $c_2$, can win Surrounding Cops and Robbers by starting on the vertices shown in Figure \ref{fig: ccr = sigma(G) but push > 0}. If the robber starts on a leaf or on the vertex $y$, he gets surrounded in one move. Suppose the robber starts on the vertex $x$. The cop $c_2$ can move to $x$ while the cop $c_1$ moves to the vertex $c_2$ was just on. This forces the robber to move to either a leaf, where he is surrounded, or to $y$, where the cops can surround him in one additional move. Therefore $\sigma(G) \leq 2$. Since $G$ contains a cycle, by Theorem \ref{thm: ccr of trees} $\ccr(G) > 1$ and so $\ccr(G) = \sigma(G) = 2$. However, by Theorem \ref{thm: many vertices of high degree implies nonzero push number}, $\push(G) \geq 1$ since there are three vertices of degree larger than two. So we have $\sigma(G) < \ccr(G) + \push(G)$.

\begin{figure}[ht]
    \centering
        \begin{tikzpicture}
            \tikzstyle{vertex}=[circle, draw=black, minimum size=25pt,inner sep=0pt]

            \node[vertex] at (0,0) (v1) {$c_1$};
            \node[vertex] at (3,0) (v2) {$c_2$};
            \node[vertex, label = $x$] at (3,3) (v3) {};
            \node[vertex, label = $y$] at (0,3) (v4) {};
            \node[vertex] at (-3,0) (v5) {};
            \node[vertex] at (6,0) (v6) {};
            \node[vertex] at (6,3) (v7) {};

            \draw (v5)--(v1)--(v2)--(v3)--(v4)--(v1);
            \draw (v2)--(v6);
            \draw (v3)--(v7);
        \end{tikzpicture}
    \caption{A graph $G$ such that $\ccr(G) = \sigma(G)$ but $\push(G) > 0$.}
    \label{fig: ccr = sigma(G) but push > 0}
\end{figure}
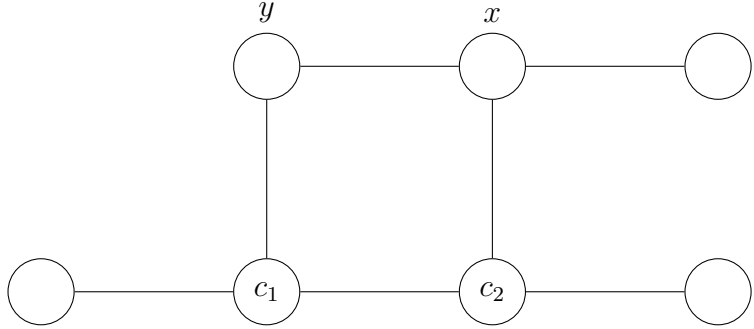

\section{Planar Graphs}\label{sec: planar graphs}

Aigner and Fromme \cite{AF84} showed that for any planar graph $G$, $c(G) \leq 3$. Bradshaw and Hosseini \cite{BH19} gave the following upper bound on the surrounding number for planar graphs. 

\begin{theorem}\label{thm: surr num planar graphs}
    \cite{BH19} Let $G$ be a connected graph. If $G$ is planar, then $\sigma(G) \leq 7$. If $G$ is bipartite and planar, then $\sigma(G) \leq 4$. 
\end{theorem}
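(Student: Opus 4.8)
The plan is to adapt the isometric-path guarding technique of Aigner and Fromme (which gives $c(G)\le 3$ for planar $G$) to the surrounding game, and then to exploit bipartiteness to cut the number of cops needed to guard each path. First I would prove a surrounding-analogue of the Aigner--Fromme guarding lemma. Fix an isometric (geodesic) path $P$ in $G$, and for the robber's vertex $r$ define the \emph{projection} of $r$ onto $P$ to be the set of vertices of $P$ closest to $r$. The claim is that a team of cops covering this projection can, after finitely many moves, shadow the robber so that the robber can never step onto $P$, nor linger adjacent to $P$ from the guarded side, without being captured or surrounded. The number of cops needed equals the width of the projection interval: in a general graph this width can be two consecutive vertices of $P$ (odd cycles let the robber sit adjacent to two consecutive path vertices at once), so two cops per path; in a bipartite graph a parity argument collapses the projection to a single vertex, so one cop per path suffices.

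With the guarding lemma in hand, I would run the Aigner--Fromme region-shrinking argument in the surrounding setting. Fix a planar embedding and maintain a region $R$ of the plane, bounded by guarded geodesics, that contains the robber. One guarded geodesic first confines the robber to one side; then a free cop (or free pair of cops) repeatedly establishes a new guarded geodesic inside $R$ so that the robber's region strictly shrinks. Since $G$ is finite and planar, at most three guarded geodesics are ever needed simultaneously to bound $R$, and the process terminates with the robber pinned to a single vertex $v$.

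Once the robber is confined to $v$, the guarding cops along the boundary geodesics already dominate the neighbourhood of $v$, and one further cop completes the surrounding. Counting then gives the two bounds: in the general planar case the boundary uses at most three geodesics, each guarded by a pair of cops, for six cops, plus one free cop to establish new geodesics and finish, a total of $\sigma(G)\le 7$; in the bipartite planar case each geodesic needs only a single cop by the parity argument, for three cops plus one free cop, a total of $\sigma(G)\le 4$.

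The hardest part will be the surrounding-version guarding lemma, specifically formulating the invariant that simultaneously (i) confines the robber and (ii) leaves the guarding cops positioned to occupy the \emph{entire} neighbourhood of $v$ at the moment $R$ degenerates to one vertex, together with the precise claim that the projection interval has width at most two in general and exactly one when $G$ is bipartite. One must also check that the region-shrinking step never forces more than three simultaneous boundary geodesics in the surrounding game, and that a new guard can be installed while the existing guards are maintained without ever releasing the robber.
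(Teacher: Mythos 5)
First, note that the paper does not prove this theorem at all: it is quoted verbatim from Bradshaw and Hosseini \cite{BH19}, so the only meaningful comparison is with the argument in that reference. Your overall plan --- adapt the Aigner--Fromme isometric-path guarding technique to the surrounding game and use bipartiteness to reduce the number of cops per guarded path --- is indeed the strategy of \cite{BH19}. However, your key quantitative claim, the one you yourself flag as the hardest part, is wrong, and the error is not repairable without changing the counts. For a robber $r$ at distance $1$ from an isometric path $P=v_1v_2\cdots$, the set of vertices of $P$ that $r$ could step onto is $N(r)\cap V(P)$, and isometry only forces this set to lie in a window of three consecutive path vertices: in a general graph $r$ can be adjacent to all of $v_i,v_{i+1},v_{i+2}$ (two triangles), so the ``width'' is $3$, not $2$; and in a bipartite graph $r$ can be adjacent to both $v_i$ and $v_{i+2}$ (a $C_4$), so parity reduces the set to one side of the bipartition but not to a single vertex --- its size is up to $2$, not $1$. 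Since in the surrounding game a cop moving onto the robber accomplishes nothing (the robber only loses by ending \emph{his} turn on a cop), the guards must already occupy every vertex of $N(r)\cap V(P)$ when the robber moves; one cop therefore cannot guard a path even in a bipartite graph (the robber behind a $C_4$ simply steps to whichever of his two path-neighbours is uncovered), and two cannot in a general graph. The correct counts are $3$ cops per guarded geodesic in general and $2$ in the bipartite case.

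With the corrected guarding lemma your bookkeeping also has to change: the Aigner--Fromme region-shrinking argument maintains at most \emph{two} guarded geodesics on the boundary of the robber's region at any time (not three), which is how the intended totals are recovered --- $2\cdot 3+1=7$ for planar graphs and $2\cdot 2=4$ for bipartite planar graphs. Your arithmetic ($3$ geodesics $\times\, 2$ cops $+\,1$, respectively $3\times 1+1$) lands on the right numbers only by coincidence, since both factors are off. Finally, the endgame step you gesture at --- that when the region degenerates the boundary guards ``already dominate'' the neighbourhood of the robber's final vertex --- is exactly where the surrounding win condition has to be checked carefully, and it cannot be taken for granted from the classical argument; this is handled explicitly in \cite{BH19} and would need to be in your write-up as well.
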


As a direct consequence of Theorems \ref{thm: bounds on surr. num in terms of ccr and push num} and \ref{thm: surr num planar graphs}, we have the following.

\begin{corollary}\label{cor: ccr of planar}
    Let $G$ be a connected graph. If $G$ is planar, then $\ccr(G) \leq 7$. If $G$ is bipartite and planar, then $\ccr(G) \leq 4$. 
\end{corollary}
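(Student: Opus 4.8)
The plan is to combine the two cited results by a single substitution, using only the left-hand inequality of Theorem \ref{thm: bounds on surr. num in terms of ccr and push num}. That theorem establishes the chain $\ccr(G) \leq \sigma(G) \leq \ccr(G) + \push(G)$ for every graph $G$, but for this corollary the relevant piece is just $\ccr(G) \leq \sigma(G)$, which encodes the fact that every winning strategy in Surrounding Cops and Robbers is also a winning strategy in Cops and Cheating Robot.

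First I would invoke Theorem \ref{thm: surr num planar graphs}. For a connected planar graph it gives $\sigma(G) \leq 7$, and for a connected bipartite planar graph it strengthens this to $\sigma(G) \leq 4$. Then, chaining with the inequality $\ccr(G) \leq \sigma(G)$ from Theorem \ref{thm: bounds on surr. num in terms of ccr and push num}, I obtain $\ccr(G) \leq \sigma(G) \leq 7$ in the planar case and $\ccr(G) \leq \sigma(G) \leq 4$ in the bipartite planar case, which are exactly the two claimed bounds.

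Because the deduction is a one-step transitivity argument, there is no genuine obstacle here; the substantive work lives in Theorem \ref{thm: bounds on surr. num in terms of ccr and push num} and in the planar surrounding-number bounds of \cite{BH19}. The only remark worth making is that the push number $\push(G)$ is not used at all: only the lower estimate $\ccr(G) \leq \sigma(G)$ is needed, so the upper half of the chain in Theorem \ref{thm: bounds on surr. num in terms of ccr and push num} is irrelevant to this particular consequence.
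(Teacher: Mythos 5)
Your proposal is correct and matches the paper exactly: the corollary is stated there as a direct consequence of Theorems \ref{thm: bounds on surr. num in terms of ccr and push num} and \ref{thm: surr num planar graphs}, using only the inequality $\ccr(G) \leq \sigma(G)$ chained with the planar bounds on $\sigma(G)$. Your observation that the push-number half of the chain is not needed here is also accurate.
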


In \cite{CFM24}, the graph operation \emph{double subdivision} on a graph $G$ is defined as follows:
\begin{itemize}
    \item For each $xy\in E(G)$, add a vertex $u$ and add the edges $xu$ and $yu$.
    \item Subdivide every edge $xy\in E(G)$.
\end{itemize}
Alternatively, double subdividing a graph can be thought of as replacing every edge of a graph with a $C_4$. The graph obtained by double subdividing $G$ is denoted $\DS(G)$. 

\begin{lemma}\label{lem: cannot be adj to 3 vertices in G in DS(G)}
    Let $G$ be a graph with vertex set $V$. If $u,v,w\in V \subsetneq V(\DS(G))$, then there does not exist a vertex $x \in V(\DS(G))$ such that $x$ is adjacent to $u$, $v$ and $w$.
\end{lemma}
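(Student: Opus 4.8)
The plan is to exploit the explicit, highly restricted structure of $\DS(G)$. The key observation is that $V(\DS(G))$ partitions into the original vertices $V$ and the vertices added during the double subdivision, and that these two types of vertices have completely predictable neighbourhoods. Once this is made precise, the statement follows from a short degree count.

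First I would describe the neighbourhoods exactly. By the construction, each edge $ab \in E(G)$ contributes precisely two new vertices to $\DS(G)$: the vertex added in the first step, which is adjacent to $a$ and $b$, and the vertex created when $ab$ is subdivided in the second step, which is again adjacent to $a$ and $b$. Both of these new vertices have degree exactly two, and their only neighbours are the two original endpoints $a$ and $b$ of the edge that produced them. Conversely, an original vertex $a \in V$ is, after the subdivision step, no longer adjacent to any other original vertex; its neighbours in $\DS(G)$ are precisely the new (degree-two) vertices coming from the edges of $G$ incident to $a$.

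Next I would argue by cases on the type of a hypothetical common neighbour $x$. If $x \in V$, then every neighbour of $x$ is a newly added degree-two vertex, so $x$ is adjacent to none of the original vertices $u, v, w$, and in particular not to all three. If instead $x \notin V$, then $x$ is one of the added vertices and hence has degree exactly two, with both of its neighbours lying in $V$; thus $x$ is adjacent to at most two vertices of $V$ and cannot be adjacent to all three of $u$, $v$, and $w$. In either case no such $x$ exists, which is exactly the claim.

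There is no genuine obstacle here, since the content is entirely structural. The only point requiring care is to confirm that the subdivision step is applied to the edges of $G$ and not to the edges introduced in the first step, so that every added vertex really does have degree two with both neighbours original; this is precisely what prevents any vertex from being a common neighbour of three original vertices.
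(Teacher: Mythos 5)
Your proof is correct and follows the same route as the paper's: split on whether the hypothetical common neighbour $x$ lies in $V$ or not, note that original vertices have no neighbours in $V$ after the construction, and that added vertices have degree two. You simply spell out the structure of $\DS(G)$ in more detail than the paper does.
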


\begin{proof}
    Suppose $x \in V$. Then in $\DS(G)$, the distance between $x$ and any other vertex in $V$ is at least two. Now suppose $x\notin V$. Then $x$ is adjacent to at most two vertices in $V$. Therefore, $x$ cannot be adjacent to any set of three vertices in $V$. 
\end{proof}

We can use the double subdivision operation to obtain examples of bipartite planar graphs with a cheating robot number of four, proving that one of the bounds in Corollary \ref{cor: ccr of planar} is tight. 

\begin{theorem}\label{thm: ccr=4 bipartite planar existence}
    There exists a planar, bipartite graph with a cheating robot number of four.
\end{theorem}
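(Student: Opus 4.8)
The plan is to exhibit the double subdivision of the $3$-cube, $\DS(Q_3)$, and to show that it is planar, bipartite, and has cheating robot number exactly $4$. The upper bound $\ccr(\DS(Q_3)) \le 4$ will follow from Corollary \ref{cor: ccr of planar} once planarity and bipartiteness are established, while the matching lower bound is, up to relabelling, already contained in the proof of Theorem \ref{thm: ccr - size of largest k-core gets arbitratily large} (the graph $\DS(Q_3)$ being precisely the member $G_4$ of the family used there).

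First I would verify the two structural properties. For bipartiteness, observe that in $\DS(G)$ every newly added vertex (the two midpoints of each replacement $C_4$) is adjacent only to original vertices of $G$, while no two original vertices are adjacent in $\DS(G)$; hence $\{V(Q_3),\, V(\DS(Q_3)) \setminus V(Q_3)\}$ is a bipartition, so $\DS(Q_3)$ is bipartite. For planarity, I would fix a planar embedding of $Q_3$ and replace each edge $xy$ by the two internally disjoint length-two paths $x$-$m_1$-$y$ and $x$-$m_2$-$y$, both drawn inside a thin neighbourhood of the arc representing $xy$; this introduces no crossings, so $\DS(Q_3)$ is planar. Since $Q_3$ is connected, so is $\DS(Q_3)$, and Corollary \ref{cor: ccr of planar} then gives $\ccr(\DS(Q_3)) \le 4$.

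For the lower bound I would show that three cops cannot capture Robert. Note first that the $k$-core bound of Theorem \ref{thm: k-core lower bound on ccr} is useless here, since $\DS(Q_3)$ has no $3$-core, so a direct evasion argument is needed. In $\DS(Q_3)$ the eight original vertices have degree $6$, and each such $v$ is joined to its three $Q_3$-neighbours $a,b,c$ by two internally disjoint paths of length two. Crucially, $a,b,c$ are pairwise nonadjacent in $Q_3$ and hence share no common neighbour in $\DS(Q_3)$; in particular, by Lemma \ref{lem: cannot be adj to 3 vertices in G in DS(G)}, no single vertex is adjacent to more than one of them. I would then invoke the robber strategy of Theorem \ref{thm: ccr - size of largest k-core gets arbitratily large} specialized to $n=4$: Robert waits on a degree-$6$ vertex until a cop is forced to push him, and each time he is pushed he escapes along a two-step path to another degree-$6$ vertex. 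Because only two of the three cops remain free to intercept and no single cop can threaten two of the targets $a,b,c$ at once, Robert always reaches a fresh high-degree vertex before being surrounded, so $\ccr(\DS(Q_3)) \ge 4$. Combining the bounds yields $\ccr(\DS(Q_3)) = 4$.

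The main obstacle is making the evasion argument fully rigorous against the cheating robot's reactive moves, namely verifying across all of the cops' responses that the timing always lets Robert slip to a new degree-$6$ vertex one step ahead of the two remaining cops. Since this is exactly the mechanism already validated in the proof of Theorem \ref{thm: ccr - size of largest k-core gets arbitratily large}, the genuinely new work here is only the routine check that the planar embedding of $Q_3$ survives double subdivision, which is what makes Corollary \ref{cor: ccr of planar} applicable and pins the value at $4$.
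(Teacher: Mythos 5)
Your proof is correct, but it uses a different witness than the paper: you take $\DS(Q_3)$, whereas the paper uses the double subdivided icosahedron $\DS(I_{20})$. Your choice has a real advantage: $\DS(Q_3)$ is exactly the graph $G_4$ from the family used in the proof of Theorem \ref{thm: ccr - size of largest k-core gets arbitratily large} (replacing every edge of $Q_3$ by a $C_4$ is the same operation as double subdivision), so the lower bound $\ccr(\DS(Q_3)) \geq 4$ is literally the $n=4$ case of an argument the paper has already carried out, and the only new work is the routine verification that double subdivision preserves planarity and produces a bipartite graph. Your structural observation is also slightly sharper than what the paper needs: since the three $Q_3$-neighbours of any vertex are pairwise nonadjacent, no vertex of $\DS(Q_3)$ is adjacent to more than \emph{one} of Robert's three escape targets, so the two non-pushing cops can spoil at most two of them; the paper's icosahedral example has five targets per vertex but must allow a single cop to threaten two adjacent targets at once, which is why it invokes Lemma \ref{lem: cannot be adj to 3 vertices in G in DS(G)} with the bound of two. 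Your graph is also smaller ($32$ versus $72$ vertices). Both proofs obtain the upper bound identically from Corollary \ref{cor: ccr of planar}. The one point you flag as an obstacle, making the evasion timing rigorous, is handled at the same level of rigour as the paper's own arguments, and the key timing fact (a cop can only reach a target $u$ or an edge into $u$ on a given turn if she was within distance one of $u$ beforehand) goes through exactly as in Theorem \ref{thm: ccr - size of largest k-core gets arbitratily large}.
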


\begin{proof}
    Consider the double subdivided icosahedron, $\DS(I_{20})$, as illustrated in Figure \ref{fig: DS(I20) bipartite planar graph with ccr = 4}. We will show that three cops are not enough to capture Robert on $\DS(I_{20})$. 

    There are more than three vertices of degree ten, so Robert can begin the game on one of these vertices. Robert's strategy will be to wait at this vertex until a cop moves to his vertex. Note that Robert is of distance two away from five other vertices of degree ten and Robert is able to move closer to any one of them on his next move without traversing an edge that was just traversed by a cop. It is not possible for a single cop to be distance one away from more than two of these vertices by Lemma \ref{lem: cannot be adj to 3 vertices in G in DS(G)}. Therefore, there exists a vertex of degree ten that Robert can move to before any of the cops can. Once Robert has done this, by the symmetry of the graph Robert can repeat the strategy of waiting until he is forced to move indefinitely. 
\end{proof}

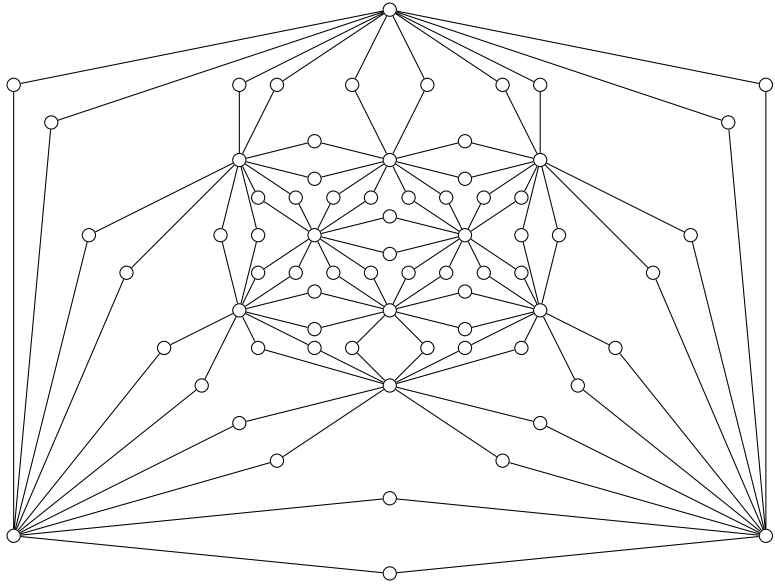
\begin{figure}
        \centering
        \begin{tikzpicture}
        \tikzstyle{vertex}=[circle, draw=black, minimum size=5pt,inner sep=0pt]

        \node[vertex] at (0,1) (v1) {};
        \node[vertex] at (-2,-1) (v2) {};
        \node[vertex] at (0,-1) (v3) {};
        \node[vertex] at (2,-1) (v4) {};
        \node[vertex] at (-1,-2) (v5) {};
        \node[vertex] at (1,-2) (v6) {};
        \node[vertex] at (-2,-3) (v7) {};
        \node[vertex] at (0,-3) (v8) {};
        \node[vertex] at (2,-3) (v9) {};
        \node[vertex] at (0,-4) (v10) {};
        \node[vertex] at (-5,-6) (v11) {};
        \node[vertex] at (5,-6) (v12) {};

        \node[vertex] at (-5, 0) (v1-11L) {};
        \node[vertex] at (-4.5, -0.5) (v1-11R) {};
        \draw (v1)--(v1-11L)--(v11)--(v1-11R)--(v1);

        \node[vertex] at (5, 0) (v1-12L) {};
        \node[vertex] at (4.5, -0.5) (v1-12R) {};
        \draw (v1)--(v1-12L)--(v12)--(v1-12R)--(v1);

        \node[vertex] at (-2, -0) (v1-2L) {};
        \node[vertex] at (-1.5, -0) (v1-2R) {};
        \draw (v1)--(v1-2L)--(v2)--(v1-2R)--(v1);

        \node[vertex] at (-0.5, 0) (v1-3L) {};
        \node[vertex] at (0.5, 0) (v1-3R) {};
        \draw (v1)--(v1-3L)--(v3)--(v1-3R)--(v1);

        \node[vertex] at (1.5, 0) (v1-4L) {};
        \node[vertex] at (2, 0) (v1-4R) {};
        \draw (v1)--(v1-4L)--(v4)--(v1-4R)--(v1);

        \node[vertex] at (-1, -0.75) (v2-3U) {};
        \node[vertex] at (-1, -1.25) (v2-3B) {};
        \draw (v2)--(v2-3U)--(v3)--(v2-3B)--(v2);

        \node[vertex] at (1, -0.75) (v3-4U) {};
        \node[vertex] at (1, -1.25) (v3-4B) {};
        \draw (v3)--(v3-4U)--(v4)--(v3-4B)--(v3);

        \node[vertex] at (-1.75, -1.5) (v2-5L) {};
        \node[vertex] at (-1.25, -1.5) (v2-5R) {};
        \draw (v2)--(v2-5L)--(v5)--(v2-5R)--(v2);

        \node[vertex] at (1.75, -1.5) (v4-6L) {};
        \node[vertex] at (1.25, -1.5) (v4-6R) {};
        \draw (v4)--(v4-6L)--(v6)--(v4-6R)--(v4);

        \node[vertex] at (-0.75, -1.5) (v3-5L) {};
        \node[vertex] at (-0.25, -1.5) (v3-5R) {};
        \draw (v3)--(v3-5L)--(v5)--(v3-5R)--(v3);

        \node[vertex] at (0.25, -1.5) (v3-6L) {};
        \node[vertex] at (0.75, -1.5) (v3-6R) {};
        \draw (v3)--(v3-6L)--(v6)--(v3-6R)--(v3);

        \node[vertex] at (0, -1.75) (v5-6U) {};
        \node[vertex] at (0, -2.25) (v5-6B) {};
        \draw (v5)--(v5-6U)--(v6)--(v5-6B)--(v5);

        \node[vertex] at (-2.25, -2) (v2-7L) {};
        \node[vertex] at (-1.75, -2) (v2-7R) {};
        \draw (v2)--(v2-7L)--(v7)--(v2-7R)--(v2);

        \node[vertex] at (2.25, -2) (v4-9L) {};
        \node[vertex] at (1.75, -2) (v4-9R) {};
        \draw (v4)--(v4-9L)--(v9)--(v4-9R)--(v4);

        \node[vertex] at (-1.75, -2.5) (v5-7L) {};
        \node[vertex] at (-1.25, -2.5) (v5-7R) {};
        \draw (v5)--(v5-7L)--(v7)--(v5-7R)--(v5);

        \node[vertex] at (-0.75, -2.5) (v5-8L) {};
        \node[vertex] at (-0.25, -2.5) (v5-8R) {};
        \draw (v5)--(v5-8L)--(v8)--(v5-8R)--(v5);

        \node[vertex] at (1.75, -2.5) (v6-9R) {};
        \node[vertex] at (1.25, -2.5) (v6-9L) {};
        \draw (v6)--(v6-9L)--(v9)--(v6-9R)--(v6);

        \node[vertex] at (0.75, -2.5) (v6-8R) {};
        \node[vertex] at (0.25, -2.5) (v6-8L) {};
        \draw (v6)--(v6-8L)--(v8)--(v6-8R)--(v6);

        \node[vertex] at (-1, -2.75) (v7-8U) {};
        \node[vertex] at (-1, -3.25) (v7-8B) {};
        \draw (v7)--(v7-8U)--(v8)--(v7-8B)--(v7);

        \node[vertex] at (1, -2.75) (v8-9U) {};
        \node[vertex] at (1, -3.25) (v8-9B) {};
        \draw (v8)--(v8-9U)--(v9)--(v8-9B)--(v8);

        \node[vertex] at (-1.75,-3.5) (v7-10L) {};
        \node[vertex] at (-1,-3.5) (v7-10R) {};
        \draw (v7)--(v7-10L)--(v10)--(v7-10R)--(v7);

        \node[vertex] at (-0.5,-3.5) (v8-10L) {};
        \node[vertex] at (0.5,-3.5) (v8-10R) {};
        \draw (v8)--(v8-10L)--(v10)--(v8-10R)--(v8);

        \node[vertex] at (1.75,-3.5) (v9-10R) {};
        \node[vertex] at (1,-3.5) (v9-10L) {};
        \draw (v9)--(v9-10L)--(v10)--(v9-10R)--(v9);

        \node[vertex] at (-4,-2) (v2-11L) {};
        \node[vertex] at (-3.5,-2.5) (v2-11R) {};
        \draw (v2)--(v2-11L)--(v11)--(v2-11R)--(v2);

        \node[vertex] at (-3,-3.5) (v7-11L) {};
        \node[vertex] at (-2.5,-4) (v7-11R) {};
        \draw (v7)--(v7-11L)--(v11)--(v7-11R)--(v7);

        \node[vertex] at (-2,-4.5) (v10-11L) {};
        \node[vertex] at (-1.5,-5) (v10-11R) {};
        \draw (v10)--(v10-11L)--(v11)--(v10-11R)--(v10);

        \node[vertex] at (4,-2) (v4-12L) {};
        \node[vertex] at (3.5,-2.5) (v4-12R) {};
        \draw (v4)--(v4-12L)--(v12)--(v4-12R)--(v4);

        \node[vertex] at (3,-3.5) (v9-12L) {};
        \node[vertex] at (2.5,-4) (v9-12R) {};
        \draw (v9)--(v9-12L)--(v12)--(v9-12R)--(v9);

        \node[vertex] at (2,-4.5) (v10-12L) {};
        \node[vertex] at (1.5,-5) (v10-12R) {};
        \draw (v10)--(v10-12L)--(v12)--(v10-12R)--(v10);

        \node[vertex] at (0, -5.5) (v11-12U) {};
        \node[vertex] at (0, -6.5) (v11-12B) {};
        \draw (v11)--(v11-12U)--(v12)--(v11-12B)--(v11);
    \end{tikzpicture}
        \caption{The graph $\DS(I_{20})$.}
        \label{fig: DS(I20) bipartite planar graph with ccr = 4}
\end{figure}

\section{Complexity}\label{sec: complexity}

Kinnersley \cite{K15} showed that determining whether a graph has a cop number of at most $k$, where $k$ is not necessarily a fixed integer, is a decision problem that is EXPTIME-complete. For a fixed $k\in \mathbb{Z}^+$, Beraducci and Intrigila \cite{BI93} were the first to show that determining whether $c(G) \leq k$ for a given graph can be done in polynomial time. This result was also proven in \cite{BCP10}. An analogous result, for fixed $k$, holds for the surrounding number as well \cite{BCCDFJP20}. In this section, we will show that determining whether $\ccr(G) \leq k$ for some fixed $k\in \mathbb{Z}^+$ can be done in polynomial time, following \cite{BCP10, BCCDFJP20}. 

\begin{theorem}\label{thm: function used for finding complexity of ccr}
    Let $2^{(V(G),V(G))}$ denote the set of all subsets of $\{(u,v) | u,v\in V(G)\}$. Let $k\in \mathbb{Z}^+$. For a graph $G$, $\ccr(G) > k$ if and only if there exists a function $\psi: (V(\boxtimes^k G), V(\boxtimes^k G)) \to 2^{(V(G), V(G))}$ with the following properties. 
    \begin{itemize}

        \item[(i)] For all $T_1 T_2 \in E(\boxtimes^k G)$, $\psi((T_1,  T_2)) \neq \emptyset$.

        \item[(ii)] For all $T_1 =  (v_1,\dots, v_k), T_2 = (u_1,\dots,u_k) \in V(\boxtimes^k G)$ such that \linebreak $T_1 T_2 \in E(\boxtimes^k G)$,
        \begin{align*}
            \psi((T_1, T_2)) \subseteq \{(r_1, r_2)| &r_1 \in V(G)\backslash (V_{T_1} \cup S_{T_1}), \\
            &r_2 \in V(G)\backslash (V_{T_2} \cup S_{T_2} \cup X_{T_1, T_2, r_1}), \\ &r_1r_2 \in E(G)\}
        \end{align*}
        where $V_{T_1}$ (analogously $V_{T_2}$) is the set of all vertices in the $k$-tuple $T_1$ ($T_2$), $S_{T_1}$ ($S_{T_2}$) is the set of all vertices that can be surrounded by $T_1$ ($T_2$) in one move, and $X_{T_1, T_2, r_1} = \{v_i \in T_1 | r_1=u_i\in T_2\}$.
        
        \item[(iii)] For all $T_1, T_2, T_3 \in V(\boxtimes^kG)$ such that $T_1 T_2, T_2 T_3 \in E(\boxtimes^k G)$, $$\psi_2((T_1, T_2)) \subseteq \psi_1((T_2, T_3))$$ where $\psi_2((T_1, T_2))$ is the set of all second entries of $\psi((T_1, T_2))$ and $\psi_1((T_2, T_3))$ is the set of all first entries of $\psi((T_2, T_3))$.
    \end{itemize}
\end{theorem}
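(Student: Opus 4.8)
The plan is to read $\psi$ as a certificate that Robert (the cheating robot) can evade $k$ cops forever, and to interpret each piece of its domain and codomain game-theoretically. A vertex $T=(v_1,\dots,v_k)$ of $\boxtimes^k G$ records the positions of the $k$ cops, and an ordered edge $(T_1,T_2)$ records one legal cop move (each cop stays or steps to a neighbour); I will use the convention that ``all cops pass'' also counts as an edge, so that $(T_0,T_0)$ is available. The set $\psi((T_1,T_2))$ is meant to list every pair $(r_1,r_2)$ such that, when the cops move $T_1\to T_2$, Robert sitting at $r_1$ can legally and safely answer by moving to $r_2$ and stay in a position from which he can keep evading. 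With this reading, condition (ii) is exactly the statement that $r_1\to r_2$ avoids capture under the turn-based rules of Section~\ref{sec: tech intro}: $r_1\notin V_{T_1}\cup S_{T_1}$ says Robert is neither on a cop nor on a vertex the cops can surround this turn (so he is not already lost), $r_2\notin V_{T_2}$ says he does not walk onto a cop, $r_2\notin S_{T_2}$ says his new vertex is not immediately surroundable next turn, $r_1r_2\in E(G)$ says the move is legal, and $r_2\notin X_{T_1,T_2,r_1}$ forbids the head-on case in which a cop traversed the edge $r_1r_2$ ending on $r_1$ (the same-direction crossing is already excluded by $r_1\notin V_{T_1}$). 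Condition (i) says every cop move admits some safe answer, and condition (iii) is the consistency requirement that any vertex Robert can reach as a response (an element of $\psi_2((T_1,T_2))$) is a legitimate starting vertex for his response to the cops' following move (an element of $\psi_1((T_2,T_3))$). I will prove both implications.

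For the ``if'' direction I assume such a $\psi$ exists and describe a surviving strategy for Robert. The invariant I maintain after each of Robert's moves is: if the cops' last move was $(T_1,T_2)$ and Robert sits at $r$, then $r\in\psi_2((T_1,T_2))$. To start, the cops place at some $T_0$; applying (i) to the self-loop $(T_0,T_0)$ gives a nonempty $\psi_2((T_0,T_0))$, so I pick $r_0$ in it, and then (iii) forces $r_0\in\psi_1((T_0,T_1))$ for every cop move $T_0\to T_1$, while (ii) guarantees $r_0\notin V_{T_0}$, so $r_0$ is a legal safe placement. For the inductive step, suppose Robert sits at $r$ with the invariant holding; (iii) gives $r\in\psi_1((T_{\mathrm{cur}},T_{\mathrm{next}}))$ for whatever move $T_{\mathrm{cur}}\to T_{\mathrm{next}}$ the cops announce, so there is some $r'$ with $(r,r')\in\psi((T_{\mathrm{cur}},T_{\mathrm{next}}))$, and Robert moves to $r'$. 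By (ii) this move avoids capture, and $r'\in\psi_2((T_{\mathrm{cur}},T_{\mathrm{next}}))$ re-establishes the invariant. Hence Robert is never captured and $\ccr(G)>k$.

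For the ``only if'' direction I assume $\ccr(G)>k$, so Robert wins against $k$ cops, and build $\psi$ from his winning region. Let $W\subseteq V(\boxtimes^k G)\times V(G)$ be the set of states $(T,r)$ (cops at $T$ about to move, Robert at $r$) from which Robert wins. Because a pursuit game on finitely many states is a robber win exactly when the set of robber-winning states is nonempty and closed under the dynamics, $W$ satisfies two properties: $W_T:=\{r:(T,r)\in W\}\neq\emptyset$ for every $T$ (otherwise the cops start at $T$ and win), and for every $(T_1,r_1)\in W$ and every cop move $T_1\to T_2$ there is a safe response $r_2$ with $(T_2,r_2)\in W$. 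I then define $\psi((T_1,T_2))=\{(r_1,r_2): (T_1,r_1)\in W,\ r_1\to r_2\text{ is a safe response to }T_1\to T_2,\ (T_2,r_2)\in W\}$. Condition (i) follows by taking any $r_1\in W_{T_1}$ and the safe winning response it admits; (ii) holds because the safe responses are precisely those respecting the capture rules encoded by the displayed constraints; and (iii) holds because $r_2\in\psi_2((T_1,T_2))$ forces $(T_2,r_2)\in W$, which supplies, for each $T_3$, a safe response witnessing $r_2\in\psi_1((T_2,T_3))$.

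The main obstacle will be the semantic bookkeeping rather than any deep combinatorics: I must verify that the five clauses of (ii) capture the turn-based capture conditions exactly — in particular that the ``surroundable in one move'' sets $S_{T}$ are the correct encoding of ``Robert is immediately lost,'' and that $X_{T_1,T_2,r_1}$ together with $r_1\notin V_{T_1}$ accounts for both the head-on and the same-direction edge-traversal captures. The second delicate point is justifying the standard but unstated fact that an infinite evasion on a finite state space is equivalent to the existence of a nonempty, dynamics-closed region $W$, since this is what allows the finite object $\psi$ to certify indefinite survival; I would phrase this as a fixpoint / backward-induction argument over the finitely many states, exactly as in~\cite{BCP10, BCCDFJP20}.
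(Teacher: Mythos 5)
Your proof is correct and takes essentially the same approach as the paper: the forward direction defines $\psi$ from Robert's winning positions and responses (your winning-region $W$ is just an explicit packaging of this), and the reverse direction builds Robert's evasion strategy inductively from (i)--(iii), including the same use of the degenerate pair $(T_0,T_0)$ to seed his initial placement. Your version is if anything slightly more careful, making explicit the invariant $r\in\psi_2((T_1,T_2))$ and the finite-state closure argument that the paper leaves implicit.
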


Before proving Theorem \ref{thm: function used for finding complexity of ccr}, here we explain the meaning of Theorem \ref{thm: function used for finding complexity of ccr}. Every vertex of the graph $\boxtimes^k G$ corresponds to a position for $k$ cops on the graph $G$. Two adjacent vertices in $\boxtimes^k G$ corresponds to a legal move that the cops can make on $G$. The purpose of $\psi$ is to map legal cop moves to legal Robert moves that allow him to win the game. Condition (i) ensures that a legal move for Robert exists given any cop move. Condition (ii) ensures that $\psi$ outputs only Robert moves that do not result in him being captured by the cops by the end of their next move. More specifically, condition (ii) has $\psi$ only output Robert moves where he does not start on a vertex occupied by a cop, moves where he does not end his turn on a vertex occupied by a cop, moves where he cannot be surrounded by the cops by the end of their next move, and moves where he is not traversing an edge that the cops traversed. Condition (iii) ensures that for any two consecutive cop moves, Robert can make two consecutive moves that allow him to evade capture. 

\begin{proof}
    For convenience, for two vertices $T_1, T_2 \in V(\boxtimes^k G)$, instead of writing $\psi((T_1, T_2))$ we will write $\psi(T_1 T_2)$. 

    Suppose $\ccr(G) > k$ and Robert is playing against $k$ cops on $G$. For all $T_1, T_2\in V(\boxtimes^k G)$, define $\psi(T_1 T_2)$ to be the set of all ordered pairs of vertices $(r_1, r_2)$ with $r_1, r_2 \in V(G)$ such that if the cops start at $T_1$ and move to $T_2$, then Robert can win by starting on $r_1$ and then moving to $r_2$. Since $\ccr(G) > k$, Robert can always win regardless of how the cops play. Thus $\psi(T_1 T_2) \neq \emptyset$. Furthermore, we know that $r_1$ is not one of the vertices occupied by the cops in $T_1$, the cops cannot surround $r_1$ in one move when moving off of $T_1$, $r_2$ is not one of the vertices occupied by the cops in $T_2$, the cops cannot surround $r_2$ in one move when they move off $T_2$, and as Robert moves from $r_1$ to $r_2$ he does not traverse an edge that the cops traverse moving from $T_1$ to $T_2$. Therefore properties (i) and (ii) hold. 

    Let $T_1, T_2, T_3 \in V(\boxtimes^k)$ such that $T_1$ is adjacent to $T_2$ and $T_2$ is adjacent to $T_3$. That is, $(T_1, T_2)$ and $(T_2, T_3)$ are two consecutive cop moves. Let $(r_1, r_2) \in \psi(T_1 T_2)$. If Robert is on $r_1$ when the cops are on $T_1$, then Robert can win when the cops move to $T_2$ by moving to $r_2$. That is, if the cops move from $T_2$ to $T_3$ Robert can move to some vertex $r_3 \in V(G)$ and continue his winning strategy. So $(r_2, r_3) \in \psi(T_2 T_3)$. Thus property (iii) holds. 

    Now suppose a mapping $\psi$ exists satisfying properties (i), (ii) and (iii). We now construct a winning strategy for Robert against $k$ cops by using $\psi$. Let $T_t \in V(\boxtimes^k G)$ denote the position of the cops in round $t$ of the game. If the cops begin the game at $T_0$, Robert can begin the game on a vertex $r_0$ in the second coordinate of a pair of vertices in $\psi(T_0 T_0)$. We know such a vertex exists since, by property (i), $\psi(T_0 T_0) \neq \emptyset$. By property (ii), $r_0 \notin V_{T_0} \cup S_{T_0}$ and so there is no cop on $r_0$ and the cops cannot surround Robert in one move. Assume that Robert is able to move along edges in $\psi(T_{t-1} T_t)$ for all rounds $t\leq a$ where $a\geq 0$ is fixed. If the cops move from $T_a$ to $T_{a+1}$, then by property (iii) Robert, starting from some vertex $r_a \in \psi_2(T_{a-1} T_a)$, can move to an adjacent vertex $r_{a+1}$ such that $(r_a, r_{a+1}) \in \psi(T_a T_{a+1})$. By property (ii), $r_{a+1} \in V_{T_{a+1}} \cup S_{T_{a+1}} \cup X_{T_a T_{a+1}}$. Thus $r_{a+1}$ does not have a cop on it, the cops are not able to capture Robert in one move from $T_{a+1}$, and Robert is not traversing an edge that the cops traversed while moving from $T_a$ to $T_{a+1}$. So, by induction, Robert can indefinitely avoid capture.
\end{proof}
 
In Algorithm \ref{alg: cheating robot number leq k}, we begin by defining $\psi$ to be a function that maps every cop move to all possible Robert moves that do not immediately lose him the game. Then we delete entries from $\psi_2$ until the conditions of Theorem \ref{thm: function used for finding complexity of ccr} are satisfied. Whenever an entry of $\psi_2$ is deleted, say the $i$th entry, then the $i$th entry of $\psi_1$ will also be deleted. Consequently, if either $\psi_1(T_1 T_2) = \emptyset$ or $\psi_2(T_1 T_2) = \emptyset$, then $\psi(T_1 T_2) = \emptyset$. If by the time Algorithm \ref{alg: cheating robot number leq k} finishes, $\psi(T_1 T_2) = \emptyset$ for some cop move $T_1 T_2 \in E(\boxtimes^k G)$, this tells us that Robert has no safe moves he can make and so $\ccr(G) \leq k$. If, by the time the algorithm finishes, for every cop move $(T_1, T_2)$ where $T_1, T_2 \in V(\boxtimes^k G)$ we have $\psi(T_1 T_2) \neq \emptyset$, then Robert is able to always safely move regardless of how the cops play and so $\ccr(G) > k$. 

\begin{algorithm}
    \caption{CHECK CHEATING ROBOT NUMBER $k$}
    \begin{algorithmic}[1]
        \Require $G=(V,E), k\geq 0$ \\
        initialize $\psi(T_1 T_2)$ to $\{(r_1, r_2) | r_1 \in V(G)\backslash (V_{T_1} \cup S_{T_1}), r_2 \in V(G)\backslash (V_{T_2} \cup S_{T_2} \cup X_{T_1 T_2}), r_1r_2 \in E(G)\}$ for all $T_1, T_2 \in V(\boxtimes^k G)$ \\
        \textbf{repeat} \\
        \ \ \ \textbf{for all} $T_1, T_2, T_3 \in V(\boxtimes^k G)$ such that $T_1 T_2, T_2 T_3 \in E(\boxtimes^k G)$ \textbf{do} \\
        \ \ \ \ \ \ $\psi(T_1 T_2) \gets \psi(T_1 T_2) \cap \{(r_1, r_2) | r_1 \in \psi_1(T_1 T_2), r_2 \in \psi_2(T_1 T_2) \cap \psi_1(T_2 T_3)\}$ \\
        \ \ \ \ \ \ $\psi(T_2 T_3) \gets \psi(T_2 T_3) \cap \{(r_1, r_2) | r_1 \in \psi_1(T_2 T_3) \cap \psi_2(T_1 T_2), r_2 \in \psi_2(T_2 T_3)\}$ \\
        \ \ \ \textbf{end for} \\
        \textbf{until} the value of $\psi$ is unchanged \\ 
        \textbf{if} there exists $T_1, T_2 \in V(\boxtimes^k G)$ such that \\
        \ \ \ $\psi(T_1 T_2) = \emptyset$ \\
        \textbf{then} \\
        \ \ \ \textbf{return} $\ccr(G) \leq k$ \\
        \textbf{else} \\
        \ \ \ \textbf{return} $\ccr(G) > k$ \\
        \textbf{end if}
    \end{algorithmic}
\label{alg: cheating robot number leq k}
\end{algorithm}

\begin{theorem}
    Algorithm \ref{alg: cheating robot number leq k} runs in polynomial time.
\end{theorem}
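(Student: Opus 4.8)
The plan is to fix $k$ as a constant, set $n = |V(G)|$, and show that each phase of Algorithm~\ref{alg: cheating robot number leq k} costs a polynomial in $n$ while only polynomially many phases occur. First I would record the basic size bounds coming from the strong power $\boxtimes^k G$: it has $|V(\boxtimes^k G)| = n^k$ vertices, so there are at most $n^{2k}$ ordered pairs $(T_1, T_2)$ and at most $n^{3k}$ ordered triples $(T_1, T_2, T_3)$, and adjacency of two $k$-tuples in $\boxtimes^k G$ can be tested in $O(k)$ time; since $k$ is fixed, all of these quantities are polynomial in $n$. Moreover, every value $\psi(T_1 T_2)$ is a subset of the ground set $\{(r_1, r_2) : r_1, r_2 \in V(G)\}$, which has at most $n^2$ elements, so each $\psi(T_1 T_2)$ can be stored and manipulated in polynomial time and space.

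Next I would bound the cost of the initialization line. For each pair $(T_1, T_2)$ the algorithm must build the set of admissible Robert moves, which requires $V_{T_1}, V_{T_2}, S_{T_1}, S_{T_2}$ and $X_{T_1 T_2}$. The sets $V_{T_i}$ and $X_{T_1 T_2}$ are read off the tuples in $O(k)$ time. The only genuinely nontrivial quantity is the ``surround in one move'' set $S_T$: a vertex $v$ lies in $S_T$ exactly when the cops of $T$ can, in a single step, occupy every vertex of $N(v)$, which is the question of whether the bipartite graph between $N(v)$ and the cops (joining each neighbour to every cop lying within distance one of it) admits a matching saturating $N(v)$. This is a bipartite matching problem and is solvable in polynomial time, so each $S_T$, and hence the entire initialization over all $n^{2k}$ pairs, takes polynomial time.

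The main obstacle is bounding the number of iterations of the repeat loop, for which I would use a monotonicity argument. A single pass of the \textbf{for all} loop ranges over at most $n^{3k}$ triples, and for each triple it performs a constant number of set intersections on sets of size at most $n^2$; hence one pass costs $n^{3k}\cdot\mathrm{poly}(n)$, which is polynomial. The updates only ever replace $\psi(T_1 T_2)$ by a subset of itself, so $\psi$ is nonincreasing and the loop terminates precisely when no pair is removed during a pass. Since the total number of pairs stored across all $\psi(T_1 T_2)$ is at most $n^{2k}\cdot n^2 = n^{2k+2}$, and every pass that actually changes $\psi$ deletes at least one such pair, there are at most $n^{2k+2}+1$ passes. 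Multiplying the per-pass cost by the number of passes keeps the total polynomial.

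Finally, the terminating scan for a pair with $\psi(T_1 T_2) = \emptyset$ examines at most $n^{2k}$ pairs and is therefore polynomial. Combining the three polynomial contributions---initialization, the repeat loop, and the final check---shows that the whole algorithm runs in time polynomial in $n$ for each fixed $k$. The two points I would emphasize as requiring care are the matching subroutine used to compute the sets $S_T$ and the monotonicity bound that caps the number of passes of the repeat loop.
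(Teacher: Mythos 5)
Your proposal is correct and follows essentially the same approach as the paper: bound the number of passes of the repeat loop by the monotone shrinkage of $\psi$, bound the cost of one pass by counting triples and set intersections, and multiply. The only substantive difference is that you also justify the cost of the initialization step (computing the sets $S_T$ via a bipartite matching subroutine), a point the paper's proof omits, and your per-set size bound of $n^2$ is sharper than the paper's $O(n^{2k})$; both yield polynomial time.
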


\begin{proof}
    Note that $|V(\boxtimes^k G)| = n^k$ and $|E(\boxtimes^k G)| \leq \binom{n^k}{2} = O(n^{2k})$. During each iteration of the repeat loop on lines 2--7, $|\psi(T_1 T_2)|$ will decrease by at least one for some $(T_1, T_2)$ where $T_1, T_2, \in V(\boxtimes^k G)$ except for the last iteration of the loop where $\psi$ is unchanged. Since there are $(n^k)(n^k) = n^{2k}$ ways of choosing an ordered pair of vertices in $\boxtimes^k G$, $(T_1, T_2)$, and since $|\psi(T_1, T_2)| = O(n^{2k})$, the repeat loop in lines 2--7 will finish in at most $O(n^{4k})$ steps. 

    Now we consider the number of steps within one iteration of the repeat loop. There are at most $n^{3k}$ ways to choose $T_1, T_2, T_3 \in V(\boxtimes^k G)$ such that $T_1 T_2, T_2 T_3 \in E(\boxtimes^k G)$. There are at most $n^k$ vertices in $\psi_1(T_1, T_2)$ and $\psi_2(T_1, T_2)$ for a given pair of vertices $(T_1, T_2)$. Calculating the intersection between two sets of vertices can be done in $O(n^{2k})$ and calculating the intersection between two sets of ordered pairs of vertices can be done in $O(n^{4k})$ steps. Thus in each iteration of the for loop, there are at most $O(n^{4k}) O(n^{2k}) = O(n^{6k})$ steps. 
    
    Therefore we have a total of $$O(n^{4k}) O(n^{3k}) O(n^{6k}) = O(n^{13k})$$ steps for Algorithm \ref{alg: cheating robot number leq k}. So Algorithm \ref{alg: cheating robot number leq k} finishes in polynomial time.
\end{proof}

\section{Graph Products}\label{sec: graph products}

In this section, we present results for the Cartesian, strong, and lexicographic products of graphs. For two graphs $G$ and $H$, all three products have vertex set $V(G) \times V(H)$.
The \emph{Cartesian} product of $G$ and $H$, denoted $G\square H$, contains the edges $(u,v)(x,y)$ where either $ux \in E(G)$ and $v=y$, or $u=x$ and $vy\in E(H)$. The \emph{strong} product, denoted $G\boxtimes H$, contains the edges $(u,v)(x,y)$ where either $ux \in E(G)$ and $v=y$, $u=x$ and $vy\in E(H)$, or $ux\in E(G)$ and $vy\in E(H)$. The \emph{lexicographic} product, denoted $G\bullet H$, contains the edges $(u,v)(x,y)$ where either $u=x$ and $vy\in E(H)$, or $ux\in E(G)$. For all such products, whether that be $G\square H$, $G\boxtimes H$, or $G\bullet H$, the subgraph $G.\{v\}$ is defined as the graph induced by the set of vertices $\{(u,v) | u\in V(G)\}$. For more on graph products, we direct the reader to the book by Imrich and Klavzar \cite{IK00}.

Huggan and Nowakowski \cite{HN21} showed that for two connected graphs $G$ and $H$, $\ccr(G\square H) \leq \ccr(G) + \ccr(H)$. They also determined the values for $\ccr(P_n \boxtimes P_m)$ and showed that $\ccr(\boxtimes^k_{i=1} P_{n_i}) \leq 3^k$. In this section, we analyze how the cheating robot number behaves with respect to the strong product, which includes determining a tighter upper bound on $\ccr(\boxtimes^k_{i=1} P_{n_i})$. We also consider the cheating robot number of the lexicographic product of two graphs. By making use of the relationship between the cheating robot number and the surrounding number, we obtain new results for the surrounding number of the strong product of graphs. 

We begin with a bound on the cheating robot number for the strong product of any two graphs.

\begin{theorem}\label{thm: General upper bd on ccr of strong prod}
    If $G$ and $H$ are graphs with more than one vertex, then
    \begin{align*}
        \ccr(G\boxtimes H) \leq \min\{ &\ccr(H)\cdot (B(G)+1) + \max\{\push(H), 1\}\cdot \ccr(G), \\ 
        &\ccr(G)\cdot (B(H)+1) + \max\{\push(G), 1\}\cdot \ccr(H)\}.
    \end{align*}
\end{theorem}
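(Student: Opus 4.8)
The two expressions inside the minimum are exchanged by swapping the roles of $G$ and $H$, so the plan is to prove just one of them, say
$$\ccr(G\boxtimes H) \le \ccr(H)\cdot(B(G)+1) + \max\{\push(H),1\}\cdot\ccr(G),$$
and invoke symmetry for the other. I would set up the two coordinate projections of a vertex $(u,v)\in V(G\boxtimes H)$ onto $V(G)$ and $V(H)$. The structural fact driving everything is that a single strong-product move changes the $G$-coordinate by at most one edge and the $H$-coordinate by at most one edge, and these can change independently or together. In particular Robert's projection onto $V(H)$ is itself a legitimate cheating robot on $H$ (each turn it moves along an edge of $H$ or stays), and similarly for $G$. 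The overall idea is to herd this $H$-shadow into a trap with a simulated winning $H$-strategy while simultaneously confining Robert's $G$-shadow, so that at the instant the $H$-shadow is caught, Robert is genuinely captured in the product.

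I would split the cops into two squads. The first consists of $\ccr(H)$ teams of $B(G)+1$ cops. Fix a winning strategy $S_H$ for $\ccr(H)$ cops on $H$ that realises the push number, so that against any play of the $H$-shadow at most $\push(H)$ distinct simulated cops ever push. Each team is assigned one simulated cop of $S_H$: it keeps all of its cops in a common $H$-column and moves that column exactly as $S_H$ dictates against Robert's $H$-shadow, while inside its column it uses $B(G)$ of its cops as bodyguards to surround Robert's $G$-shadow (by definition of $B(G)$, after finitely many moves the guards can surround and then keep surrounding the $G$-shadow in $G$) and one central cop tracking the $G$-shadow. Because a strong-product move can change both coordinates at once, each team cop can update its $H$-column and its $G$-position in the same move, so the $H$-simulation and the $G$-bodyguarding run in parallel. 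The maintained surrounding stops Robert from sliding in the $G$-direction into a guarded column, and the central cop delivers the blow: when $S_H$ forces a simulated cop onto the $H$-shadow, that team's column coincides with Robert's and its central cop sits on Robert's vertex.

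The second squad is a reserve of $\ccr(G)$ cops for each pushing event, $\max\{\push(H),1\}$ such groups in total. These repair the one way Robert escapes: when $S_H$ requires a cop to \emph{push} the $H$-shadow, realising that push in the product momentarily lets Robert make a diagonal move that slips out of the bodyguard surrounding in the $G$-direction (the pushing team's guards are left behind in the old column). A reserve group of $\ccr(G)$ cops then plays a winning $G$-strategy to re-capture Robert's $G$-shadow and restore the invariant. Since $S_H$ uses at most $\push(H)$ distinct pushing cops, and since at least one such group is still needed to finish the capture in the $G$-direction even when $\push(H)=0$, the factor $\max\{\push(H),1\}$ suffices; adding the $+1$ central cop per $H$-team and the $\ccr(G)$ reserve per push yields the stated count.

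The hard part will be exactly the entanglement of simultaneous two-coordinate moves with the two simulations in the turn-based formulation. I must argue that the $H$-simulation remains faithful even though each realised cop move is coupled to bodyguard motion, that Robert's only way out of the maintained $G$-surrounding is via a push of the $H$-shadow, and that after each such push the reserve squad can re-synchronise and re-capture the $G$-shadow without undoing the progress already made in $H$. Checking that the ``no traversing a just-traversed edge'' win condition is respected through each coupled move, and that the finitely many pushes cannot be exploited by Robert to stall forever, is where the genuine work lies.
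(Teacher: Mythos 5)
Your proposal follows essentially the same route as the paper's proof: $\ccr(H)$ teams of $B(G)+1$ cops, each simulating one cop of a push-optimal winning strategy on $H$ while catching and then bodyguarding Robert's $G$-shadow inside its copy of $G$, plus $\max\{\push(H),1\}$ reserve groups of $\ccr(G)$ cops that handle the copies of $G$ Robert is pushed into and deliver the final capture, with the other term of the minimum obtained by symmetry. The remaining verifications you flag (faithfulness of the coupled two-coordinate moves, blocking the return to the old column during a push) are exactly the points the paper's proof addresses, so your plan is sound.
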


\begin{proof}
    Let $S$ be a winning strategy for $\ccr(H)$ cops on $H$ such that at most $\push(H)$ cops push Robert. Let $x_1, \dots, x_{\ccr(H)}$ be the vertices in $H$ that the cops start on when using strategy $S$ and, without loss of generality, suppose the $\push(H)$ cops that Robert can force to push him start on the vertices $x_1, x_2, \dots, x_{\push(H)}$. Let $c_i$ denote the cop starting on $x_i$ for each $1\leq i\leq \ccr(H)$. We will use $S$ to develop a winning strategy for $\ccr(H)\cdot (B(G)+1) + \push(H)\cdot \ccr(G)$ cops on $G\boxtimes H$.

    For a given vertex $v\in V(H)$, we will use the same notation as before where $G.\{v\}$ denotes the subgraph of $G\boxtimes H$ induced by the vertex set $\{(u,v) | u\in V(G)\}$. 
    At the beginning of the game on $G\boxtimes H$, $B(G) + 1$ cops will be placed on $G.\{x_i\}$ for each $1\leq i\leq \ccr(G)$ and an additional $\ccr(G)$ cops will be placed on each $G.\{x_i\}$ for $1\leq i \leq \push(H)$. Let $C_i$ denote the set of all cops that are starting on $G.\{x_i\}$. In finitely many turns, $B(G) + 1$ cops on each $G.\{x_i\}$ will first use a cop winning strategy to catch Robert's shadow on $G.\{x_i\}$. Then while one cop continues moving to remain on Robert's shadow, the other $B(G)$ cops use the bodyguard winning strategy where they treat Robert's shadow as the president. 

    The cops then proceed to capture Robert as follows. If for some $1\leq i\leq \ccr(H)$ the strategy $S$ has the cop $c_i$ move from vertex $v_1$ to $v_2$ where Robert is not on $v_2$ when playing on $H$, then the set of cops $C_i$ will move from $G.\{v_1\}$ to $G.\{v_2\}$ while maintaining the bodyguard winning strategy and while capturing Robert's shadow on $G.\{v_2\}$. The cops are able to accomplish this since, by the construction of $G\boxtimes H$, if a cop is on the vertex $(u, v_1)$ then that cop can move to any vertex in the set $\{(x, v_2)\in V(G\boxtimes H) | x\in N_G [u]\}$. If for some $1\leq i\leq \push(H)$ the strategy $S$ has the cop $c_i$ move from $v_1$ to $v_2$ where Robert is on $v_2$ when playing on $H$, then the $\ccr(G)$ cops from the set $C_i$ that are not a part of the bodyguard winning strategy will move from $G.\{v_1\}$ to $G.\{v_2\}$ while the remaining $B(G)+1$ cops stay on $G.\{v_1\}$ and continue moving onto Robert's shadow and using the bodyguard winning strategy. By using this strategy, the $B(G)+1$ cops prevent Robert from moving onto $G.\{v_1\}$. The $\ccr(G)$ cops will use the cop winning strategy on $G$ to catch Robert on $G.\{v_2\}$ and force Robert to move off $G.\{v_2\}$. Once Robert has moved off of $G.\{v_2\}$, either by force or by his own choice, the $B(G)+1$ cops on $G.\{v_1\}$ will move onto $G.\{v_2\}$ continuing the same strategy of catching Robert's shadow and using the winning bodyguard strategy. 
    
    Since strategy $S$ eventually results in Robert's capture at some vertex, say $v_f$, by using the above strategy on $G\boxtimes H$ the cops are able to force Robert onto $G.\{v_f\}$. Afterwards, $\ccr(G)$ cops will move onto $G.\{v_f\}$ while for every $u\in V(H)$ adjacent to $v_f$, the subgraphs $G.\{u\}$ each contain at least $B(G)+1$ cops that are preventing Robert from moving off of $G.\{v_f\}$. From here the $\ccr(G)$ cops use the cop winning strategy for $G$ to capture Robert on $G.\{v_f\}$.

    This proves that $\ccr(G\boxtimes H) \leq \ccr(H)\cdot (B(G)+1) + \max\{\push(H), 1\}\cdot \ccr(G)$. A similar proof can be done to show that $\ccr(G\boxtimes H) \leq \ccr(G)\cdot (B(H)+1) + \max\{\push(G), 1\}\cdot \ccr(H)$.
\end{proof}

In \cite{HN21}, $\ccr(P_n \boxtimes P_m)$ was determined for any $n,m\in \mathbb{Z}^+$. Next, we obtain exact values for the cheating robot numbers for $C_n \boxtimes P_m$, $C_n \boxtimes C_m$, $K_n \boxtimes P_m$, $K_n \boxtimes C_m$, and $K_n \boxtimes K_m$ for all admissible $n,m\in \mathbb{Z}^+$. 

\begin{theorem}\label{thm: ccr of strong prod exact values}
    \setlength{\abovedisplayskip}{0pt}
    \setlength{\belowdisplayskip}{0pt}
    If $n,m \geq 3$ and $r,s\geq 2$, then
    \begin{gather*}
        \ccr(C_n \boxtimes P_s) = 5, \\
        \ccr(C_n \boxtimes C_m) = 8, \\
        \ccr(K_r \boxtimes P_s) = 2r-1, \\
        \ccr(K_r \boxtimes C_m) = 3r-1,
    \end{gather*}
    and 
    $$\ccr(K_r \boxtimes K_s) = rs - 1.$$
\end{theorem}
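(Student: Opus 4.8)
The plan is to show that in every case the value equals the \emph{minimum degree} of the product, with the lower bound supplied by the $k$-core bound and the upper bound by either the general strong-product estimate or a direct capture strategy. First I would record the lower bounds. Since $\deg_{G\boxtimes H}(u,v)=\deg_G(u)+\deg_H(v)+\deg_G(u)\deg_H(v)$, the minimum degrees are $5,\,8,\,2r-1,\,3r-1$ and $rs-1$ respectively (the minima on the cylinder and on $K_r\boxtimes P_s$ are attained at a path-endpoint). Each product is connected and is its own $\delta$-core, so Theorem \ref{thm: k-core lower bound on ccr} gives $\ccr\geq\delta$ in each case. The last identity is then immediate: $K_r\boxtimes K_s\cong K_{rs}$, and since $\ccr(K_n)=n-1$ we get $\ccr(K_r\boxtimes K_s)=rs-1$.

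For the two families involving $K_r$, I would apply Theorem \ref{thm: General upper bd on ccr of strong prod} with $G=K_r$. This needs $B(K_r)=r-1$, which I would verify directly: surrounding the president on $K_r$ means occupying all $r-1$ of its neighbours, so $r-1$ bodyguards are both necessary and, placed one per non-president vertex, sufficient. Using $\ccr(K_r)=r-1$, $\ccr(P_s)=1$, $\ccr(C_m)=2$, $\push(C_m)=0$ (Theorem \ref{thm: push num of cycles is 0}) and $\push(P_s)\leq\ccr(P_s)=1$ (Theorem \ref{thm: push num trivial bd}), the first term of the minimum becomes $1\cdot(B(K_r)+1)+1\cdot(r-1)=2r-1$ for $K_r\boxtimes P_s$ and $2\cdot(B(K_r)+1)+1\cdot(r-1)=3r-1$ for $K_r\boxtimes C_m$. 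These match the lower bounds, giving equality for all admissible $s,m$.

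The remaining two values require a direct argument, since for long cycles the general bound is not tight: by Lemma \ref{lem: Bodyguard numbers of cycles}, $B(C_n)=3$ when $n>5$, which inflates Theorem \ref{thm: General upper bd on ccr of strong prod} to $6$ and $10$ (the cases $n\leq 5$ do yield $5$ and $8$ and can be read off directly). I would instead use a ``lid'' strategy uniform in $n$. Writing the cylinder as $\mathbb{Z}_n\times\{0,\dots,s-1\}$ with king-move adjacency, three cops form a lid on the three cells directly above the robber (columns $x_R-1,x_R,x_R+1$ in the row one higher): this blocks every upward move and tracks the robber's cycle-coordinate by diagonal steps, driving him to the boundary row $y=0$ while the other two cops run a $C_n$-catching strategy in that row; when the chasers close in, the five cops occupy exactly the five neighbours of the degree-$5$ boundary vertex and surround him. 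For the torus $C_n\boxtimes C_m$ the same idea uses two lids (three cops above, three below) to pin the robber to a single ring, after which two chasers catch him there and the eight cops occupy all eight king-neighbours.

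The hard part will be making the lid argument rigorous against a \emph{cheating} robber. I must show the confining formation can be established from an arbitrary start (cornering the robber's shadow on the cycle), that it is maintainable move-by-move, and that neither the robber's ability to react to the cops' moves nor the edge-traversal losing condition lets him slip through a cell a lid cop has just vacated; pinning his coordinate on the boundaryless torus is the most delicate point. Once confinement is proved, reducing to a two-cop catch on $C_n$ and surrounding at a minimum-degree vertex is routine.
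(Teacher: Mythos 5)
Your proposal follows essentially the same route as the paper: minimum-degree lower bounds via Theorem \ref{thm: k-core lower bound on ccr}, the general bound of Theorem \ref{thm: General upper bd on ccr of strong prod} for the products involving $K_r$ (with $K_r\boxtimes K_s\cong K_{rs}$ as a clean shortcut for the last case), and a direct confinement strategy for $C_n\boxtimes P_s$ and $C_n\boxtimes C_m$ where the general bound is not tight. Your ``lid'' of three cops is exactly the paper's device of occupying the closed neighbourhood of Robert's shadow in an adjacent copy of $C_n$ via the bodyguard strategy (doubled for the torus), and the rigour you flag as still owed is treated at essentially the same level of detail in the paper's own proof.
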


\begin{proof}
    We have that $\delta(C_n \boxtimes P_s) = 5$, $\delta(C_n \boxtimes C_m) = 8$, $\delta(K_r \boxtimes P_s) = 2r-1$, $\delta(K_r \boxtimes C_m) = 3r-1$ and $\delta(K_r \boxtimes K_s) = rs-1$. Thus we have that the cheating robot numbers of these products are bounded below by these values by Theorem \ref{thm: k-core lower bound on ccr}. For $K_n\boxtimes P_m$, $K_n\boxtimes C_m$ and $K_n\boxtimes K_m$, the upper bound from Theorem \ref{thm: General upper bd on ccr of strong prod} matches their minimum degrees. For the other two products, $C_n \boxtimes P_s$ and $C_n \boxtimes C_m$, we will describe a winning strategy for the cops. 

    Let $v_1, \dots, v_s$ be the vertices of the path $P_s$ where $v_i$ is adjacent to $v_{i+1}$ for each $1\leq i\leq s-1$. Suppose Robert is playing against five cops on $C_n\boxtimes P_s$. The cops have a winning strategy by starting in the subgraph $C_n.\{v_1\}$. Three of the cops will use the bodyguard winning strategy to surround Robert's shadow on $C_n.\{v_1\}$ while the other two cops use the cop winning strategy to move onto the vertex Robert's shadow occupies. After finitely many moves the closed neighbourhood of Robert's shadow will be occupied by three cops. From here the cops can use a winning strategy from Cops and Cheating Robot to capture Robert on $C_n.\{v_s\}$ as described in the proof for Theorem \ref{thm: General upper bd on ccr of strong prod}.

    If Robert is playing against eight cops on $C_n\boxtimes C_m$ then the cops can set up their strategy by placing four cops on two different cycles, $C_n. \{x_1\}$ and $C_n.\{x_2\}$ where $x_1,x_2 \in V(C_m)$, using the cop winning strategy to move a cop onto Robert's shadow on the two cycles, and then using the bodyguard winning strategy to eventually occupy the closed neighbourhood of Robert's shadow on the two cycles. Six cops will be occupying these closed neighbourhoods which leaves two cops to push Robert where needed and capture him using a winning cheating robot strategy on $C_m$. 
\end{proof}

Next, we consider $k$-dimensional strong grids. 

\begin{theorem}\label{thm: ccr of k-D strong grids}
    If $k\in \mathbb{Z}^+$ and $n_i \geq 3$ for $1\leq i\leq k$, then $$\ccr\left(\boxtimes_{i=1}^k P_{n_i}\right) \leq \sum_{j=0}^{k-1} 3^j = \frac{3^k -1}{2}.$$
\end{theorem}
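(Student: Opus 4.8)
The plan is to induct on the dimension $k$, peeling off one path factor at each step and invoking the general product bound of Theorem \ref{thm: General upper bd on ccr of strong prod} together with the known bodyguard number of a strong grid from Lemma \ref{lem: Bodyguard number of strong grid}. Write $G_k = \boxtimes_{i=1}^k P_{n_i}$, so that $G_k = G_{k-1}\boxtimes P_{n_k}$. For the base case $k=1$, the graph $G_1 = P_{n_1}$ is a tree, so $\ccr(G_1) = 1 = \frac{3^1-1}{2}$ by Theorem \ref{thm: ccr of trees}, which agrees with $\sum_{j=0}^{0} 3^j$.

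For the inductive step I would apply Theorem \ref{thm: General upper bd on ccr of strong prod} to $G_{k-1}\boxtimes P_{n_k}$, taking the orientation of the factors that exposes $B(G_{k-1})$ rather than $B(P_{n_k})$; that is, I would use the term $\ccr(P_{n_k})\cdot(B(G_{k-1})+1) + \max\{\push(P_{n_k}),1\}\cdot\ccr(G_{k-1})$ of the minimum. The reason for this orientation is that the push factor is then the push number of the \emph{path}: since $P_{n_k}$ is a tree, Theorem \ref{thm: ccr of trees} gives $\ccr(P_{n_k}) = 1$, and hence $\push(P_{n_k}) \le \ccr(P_{n_k}) = 1$ by Theorem \ref{thm: push num trivial bd}, so that $\max\{\push(P_{n_k}),1\} = 1$. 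In particular we never need to control the (a priori unknown) push number of the grid $G_{k-1}$. Substituting $B(G_{k-1}) = \frac{3^{k-1}-1}{2}$ from Lemma \ref{lem: Bodyguard number of strong grid} then yields
$$\ccr(G_k) \le \left(\frac{3^{k-1}-1}{2}+1\right) + \ccr(G_{k-1}).$$
Applying the inductive hypothesis $\ccr(G_{k-1}) \le \frac{3^{k-1}-1}{2}$ gives $\ccr(G_k) \le 3^{k-1}$, and since $1 \le 3^{k-1}$ we have $3^{k-1} \le \frac{3^k-1}{2}$, closing the induction.

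I expect the genuine content to be precisely this choice of orientation: placing the path in the role whose push number is forced to be at most one, so that the bound degenerates to $B(G_{k-1}) + \ccr(G_{k-1}) + 1$ and we may feed in the exact bodyguard number of the lower-dimensional grid. The only delicate bookkeeping is at the smallest dimensions, where the two bodyguard inputs must be reconciled: as a one-dimensional grid Lemma \ref{lem: Bodyguard number of strong grid} assigns $B(P_n) = 1$, whereas as a tree with two leaves Lemma \ref{lem: bodyguard num of tree} gives $B(P_n) = 2$. Using the larger value at $k=2$ still gives $\ccr(G_2) \le 2 + 1 + 1 = 4 = \frac{3^2-1}{2}$, matching the claimed bound, and for $k \ge 3$ the factor $G_{k-1}$ is genuinely at least two-dimensional so Lemma \ref{lem: Bodyguard number of strong grid} applies unambiguously. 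Once the orientation is fixed, the arithmetic $\frac{3^{k-1}-1}{2}+1+\frac{3^{k-1}-1}{2} = 3^{k-1} \le \frac{3^k-1}{2}$ is routine; I would note in passing that this argument in fact delivers the slightly stronger bound $\ccr(G_k) \le 3^{k-1}$ for $k \ge 2$.
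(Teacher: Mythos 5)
Your argument is correct and follows essentially the same route as the paper: induction on $k$, writing $\boxtimes_{i=1}^{k}P_{n_i}$ as $\bigl(\boxtimes_{i=1}^{k-1}P_{n_i}\bigr)\boxtimes P_{n_k}$ and applying Theorem \ref{thm: General upper bd on ccr of strong prod} in the orientation that uses $B\bigl(\boxtimes_{i=1}^{k-1}P_{n_i}\bigr)$ together with $\ccr(P_{n_k})=\max\{\push(P_{n_k}),1\}=1$, the only cosmetic difference being that the paper starts the induction at $k=2$ by citing $\ccr(P_n\boxtimes P_m)\le 4$ directly rather than reconciling the two bodyguard lemmas at dimension one. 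Your substitution of the exact value $B\bigl(\boxtimes_{i=1}^{k-1}P_{n_i}\bigr)=\tfrac{3^{k-1}-1}{2}$ in fact yields the slightly sharper bound $3^{k-1}$ for $k\ge 2$, whereas the paper's displayed computation effectively uses $3^{k-1}-1$ for this bodyguard number and so lands exactly on $\tfrac{3^k-1}{2}$.
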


\begin{proof}
    We proceed by induction on $k$. When $k=2$, $\sum_{j=0}^{k-1} 3^j = 4$. Huggan and Nowakowski \cite{HN21} showed that $\ccr(P_n \boxtimes P_m) \leq 4$ for any $n, m \geq 2$ and so the theorem holds when $k=2$. Fix $k \geq 2$ and assume $$\ccr(\boxtimes_{i=1}^{k - 1} P_{n_i}) \leq \sum_{j=0}^{k-2} 3^j.$$
    We can write $\boxtimes_{i=1}^{k} P_{n_i}$ as $\left(\boxtimes_{i=1}^{k-1} P_{n_i}\right) \boxtimes P_{n_k}$. By applying Theorem \ref{thm: General upper bd on ccr of strong prod} and Lemma \ref{lem: Bodyguard number of strong grid}, we obtain 
    \begin{align*}
        \ccr(\boxtimes_{i=1}^{k-1} P_{n_i} \boxtimes P_{n_k}) &\leq \ccr(P_{n_k})\cdot \left(B(\boxtimes_{i=1}^{k-1} P_{n_i}) + 1\right) + \max\{\push(P_{n_k}), 1\}\cdot \ccr(\boxtimes_{i=1}^{k-1} P_{n_i}) \\
        &\leq (1)\left(3^{k-1} - 1 + 1\right) + (1)\left(\sum_{j=0}^{k-2} 3^j \right) \\
        &= 3^{k-1} + \sum_{j=0}^{k-2} 3^j \\
        &= \sum_{j=0}^{k-1} 3^j
    \end{align*}
    as required. 
\end{proof}

Using a similar technique as in the proof of Theorem \ref{thm: General upper bd on ccr of strong prod}, we obtain an upper bound on the cheating robot number of the lexicographic product of two graphs. 

\begin{theorem}\label{thm: ccr Lexicographic prod upper bound}
    If $G$ and $H$ are graphs, then $$\ccr(G\bullet H) \leq |V(H)|\ccr(G) + \max\{\push(G), 1\}\ccr(H).$$
\end{theorem}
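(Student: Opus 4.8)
The plan is to adapt the proof of Theorem~\ref{thm: General upper bd on ccr of strong prod}, but now running the main strategy on the outer factor $G$ and blocking whole clusters of the inner factor $H$. First I would fix a winning strategy $S$ for $\ccr(G)$ cops on $G$ in which at most $\push(G)$ distinct cops ever push Robert, say with pushers $c_1,\dots,c_{\push(G)}$ and starting vertices $x_1,\dots,x_{\ccr(G)}\in V(G)$. In $G\bullet H$ the subgraph induced by $\{(u,w):w\in V(H)\}$ over a fixed $u\in V(G)$ is a copy of $H$, which I will call the cluster over $u$ and denote $H_u$; whenever $uu'\in E(G)$ the clusters $H_u$ and $H_{u'}$ are joined completely. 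I would realize each of the $\ccr(G)$ cops of $S$ by a \emph{blocking team} of $|V(H)|$ cops that fully occupies one cluster: since adjacent clusters are completely joined, a team standing on $H_u$ can move to fully occupy $H_{u'}$ for any $u'$ with $uu'\in E(G)$, so a blocking team exactly mimics a single cop step in $G$, and a fully occupied cluster can never be entered by Robert. In addition I would deploy $\max\{\push(G),1\}$ \emph{capture teams} of $\ccr(H)$ cops each, one riding inside the blocking team of each pusher (and one spare when $\push(G)=0$). This uses precisely $|V(H)|\ccr(G)+\max\{\push(G),1\}\ccr(H)$ cops.

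Next I would have the blocking teams run $S$ at the level of clusters, tracking Robert's $G$-shadow (the cluster he currently occupies): every non-pushing step of $S$ is copied by sliding the appropriate blocking team to the neighbouring cluster. The essential step is the push. When $S$ calls for a pusher $c_i$ to step onto Robert's vertex, rather than moving a blocking team onto Robert's cluster $H_u$ (which would vacate a cluster and, because the join between clusters is complete, leave Robert an escape that no single traversed edge can seal), I would keep all blocking teams fixed and send the capture team $D_i$ into $H_u$, where it plays the winning $H$-strategy on this copy of $H$. Since $\ccr(H)$ cops win on $H$, Robert cannot survive in $H_u$ indefinitely, so to avoid capture he must leave $H_u$ for an adjacent unblocked cluster; this is exactly a move of his $G$-shadow and hence realizes the push that $S$ demands. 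When Robert changes clusters his $H$-coordinate may jump arbitrarily across the complete join, but the capture team simply follows into the new cluster and restarts the $H$-strategy there.

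Because $S$ is winning, after finitely many pushes Robert's $G$-shadow is forced to a vertex $v_f$ at which $S$ captures. A capture at $v_f$ must block all of $N_G[v_f]$ on a single cop move, so at least $\deg_G(v_f)$ cops take part, whence $\deg_G(v_f)\le\ccr(G)$; this lets me bring the blocking teams to occupy the clusters over all of $N_G(v_f)$, confining Robert to the single cluster $H_{v_f}$. A capture team then occupies $H_{v_f}$ and runs the $H$-strategy to completion, and since every neighbouring cluster is blocked Robert has nowhere to flee and is caught, giving the claimed bound.

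The hard part will be checking that each push is faithfully simulated. I must verify that the capture team's pursuit inside a cluster genuinely forces Robert to advance his $G$-shadow rather than stall, that freezing the blocking teams during the (multi-move) push never opens a hole in the net---this is where full occupation helps, since a fully occupied cluster blocks Robert statically regardless of his position---and that the arbitrary jump of Robert's $H$-coordinate whenever he changes clusters cannot be exploited, which should hold because any cluster change is a legal $G$-move that the winning strategy $S$ is already prepared to answer. Correctly interleaving the $H$-level pursuit with the $G$-level strategy $S$, together with confirming the degree bound at $v_f$ needed for the final confinement, is the crux of the argument.
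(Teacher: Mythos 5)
Your proposal follows essentially the same route as the paper's proof: blocking teams of $|V(H)|$ cops that fully occupy a cluster to simulate each cop of the $G$-strategy $S$, capture teams of $\ccr(H)$ cops (with $\max\{\push(G),1\}$ of them) that enter Robert's cluster to realize each push by evicting him with a winning $H$-strategy, and a final capture on the cluster $\{v_f\}.H$ where $S$ wins. The only inessential deviation is your closing digression about $\deg_G(v_f)\le\ccr(G)$, which the paper does not use (and which is not quite how capture works in Cops and Cheating Robot, where edge-traversals also count), but it does not affect the argument.
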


\begin{proof}


    Let $S$ be a winning strategy for $\ccr(G)$ cops on $G$ such that at most $\push(G)$ cops push Robert. Let $x_1\dots, x_{\ccr(G)}$ be the vertices in $G$ that the cops start on when using strategy $S$. Without loss of generality assume the cops that start on the vertices $x_1, \dots, x_{\push(G)}$ are the cops that Robert can force to push him. Let $c_i$ denote the cop starting on $x_i$. 

    Suppose we are playing with $|V(H)|\ccr(G) + \max\{\push(G), 1\}\ccr(H)$ bodyguards on $G\bullet H$. We begin the game by placing $|V(H)|$ cops on each $\{x_i\}.H$ where $1\leq i\leq \ccr(G)$ and an additional $\ccr(H)$ cops on $\{x_j\}.H$ where $1\leq j\leq \push(G)$. The $|V(H)|$ cops on each $\{x_i\}.H$ will distribute themselves so that each vertex in $\{x_i\}.H$ is occupied by exactly one cop from the $|V(H)|$ cops while the $\ccr(H)$ cops can be placed anywhere within each $\{x_i\}.H$. Let $C_i$ denote the set of all cops starting on $\{x_i\}.H$. Once the cops are set up, if for some $1\leq i\leq \ccr(G)$ the strategy $S$ has the cop $c_i$ move from the vertex $v_1$ to $v_2$ where Robert is not on $v_2$ when playing on $G$, then the set of cops $C_i$ will move from $\{v_1\}.H$ to $\{v_2\}.H$ such that every vertex of $\{v_2\}.H$ ends up with a cop on it. If, for some $1\leq i\leq \push(G)$, the strategy $S$ has the cop $c_i$ move from $v_1$ to $v_2$ where Robert is on $v_2$ when playing on $G$, then the $\ccr(H)$ cops from $C_i$ will move from $\{v_1\}.H$ to $\{v_2\}.H$ while the $|V(H)|$ cops on $\{v_1\}.H$ remain at $\{v_1\}.H$. Next, the $\ccr(H)$ use a winning strategy to capture Robert on $\{v_2\}.H$, forcing him to another copy of $H$. Once Robert has moved off $\{v_2\}.H$, the $|V(H)|$ cops on $\{v_1\}.H$ move to $\{v_2\}.H$.

    Since strategy $S$ results in Robert's capture on $G$, the above strategy will trap Robert onto some copy of $H$, say $\{v_f\}.H$. From there, a group of $\ccr(H)$ cops can move onto $\{v_f\}.H$ and capture Robert. 
\end{proof}

By considering Theorem \ref{thm: ccr Lexicographic prod upper bound}, we can obtain the exact value of the cheating robot numbers for $P_n \bullet P_m$, $P_n \bullet C_m$, $C_n \bullet P_m$, and $C_n \bullet C_m$.

\begin{corollary}\label{cor: Lexicographic prod equalities}
    \setlength{\abovedisplayskip}{0pt}
    \setlength{\belowdisplayskip}{0pt}
    If $n,m, r, s\in \mathbb{Z}^+$ and $r, s \geq 2$, then
    \begin{gather*}
        \ccr(P_n\bullet P_m) = m+1, \\
        \ccr(P_n \bullet C_s) = s+2, \\
        \ccr(C_r \bullet P_m) = 2m+1, \\
        \ccr(C_r \bullet C_s) = 2s+2.
    \end{gather*}
\end{corollary}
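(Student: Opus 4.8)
The plan is to sandwich each of the four values between a lower bound coming from the minimum degree and the upper bound already supplied by Theorem \ref{thm: ccr Lexicographic prod upper bound}, and then to observe that the two bounds coincide. For the lower bound I would first record the degree formula for the lexicographic product: in $G \bullet H$ the vertex $(u,v)$ is adjacent to $(u,y)$ for every $y \in N_H(v)$ and to $(x,y)$ for every $x \in N_G(u)$ and every $y \in V(H)$, and these two neighbour sets are disjoint, so
\[
  \deg_{G\bullet H}(u,v) = \deg_H(v) + |V(H)|\cdot \deg_G(u).
\]
Taking minima gives $\delta(G \bullet H) = \delta(H) + |V(H)|\,\delta(G)$. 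Since every graph is itself a $\delta$-core, Theorem \ref{thm: k-core lower bound on ccr} yields $\ccr(G\bullet H) \ge \delta(G\bullet H)$. Substituting $\delta(P_n) = 1$, $\delta(C_r) = 2$, $|V(P_m)| = m$, and $|V(C_s)| = s$ produces the four lower bounds $m+1$, $s+2$, $2m+1$, and $2s+2$ in turn.

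For the upper bound I would apply Theorem \ref{thm: ccr Lexicographic prod upper bound} directly, being careful about the orientation of each product, since the lexicographic product is not commutative and the factor in the role of $G$ matters. Here $\ccr(P_n) = 1$ by Theorem \ref{thm: ccr of trees} and $\ccr(C_r) = 2$ as established earlier. The key simplification is that the multiplier $\max\{\push(G), 1\}$ is always exactly $1$: when $G$ is a path this follows from $\push(P_n) \le \ccr(P_n) = 1$ via Theorem \ref{thm: push num trivial bd}, and when $G$ is a cycle it follows from $\push(C_r) = 0$ via Theorem \ref{thm: push num of cycles is 0}. For example, for $P_n \bullet C_s$ the bound reads $|V(C_s)|\,\ccr(P_n) + \max\{\push(P_n),1\}\,\ccr(C_s) = s\cdot 1 + 1\cdot 2 = s+2$, and the other three cases are computed in exactly the same way, each matching the respective minimum degree.

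Comparing the two estimates, each upper bound equals the corresponding minimum-degree lower bound, so equality holds in all four identities. I do not expect any genuine obstacle: the only content is the degree computation for $G \bullet H$ and the bookkeeping of the parameter values $\ccr(P_n)$, $\ccr(C_r)$, $\push(P_n)$, and $\push(C_r)$. The reason everything closes so cleanly is that Theorem \ref{thm: ccr Lexicographic prod upper bound} is engineered to meet the trivial minimum-degree lower bound precisely when both factors are paths or cycles, so the corollary is essentially a matter of substituting the known small-graph parameters and checking that the arithmetic agrees.
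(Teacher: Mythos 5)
Your proposal is correct and is essentially the paper's intended argument: the paper states the corollary as a direct consequence of Theorem~\ref{thm: ccr Lexicographic prod upper bound}, with the matching lower bound coming from the minimum degree of the lexicographic product via Theorem~\ref{thm: k-core lower bound on ccr}, exactly as you compute. The only caveat is the degenerate case $n=1$ or $m=1$ (where $\delta(P_1)=0$ and the stated formulas fail), but that is an issue with the corollary's hypotheses as written rather than with your argument, which implicitly assumes $n,m\geq 2$ as the paper surely intends.
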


By using the bodyguard number, an analogous result to Theorem \ref{thm: General upper bd on ccr of strong prod} for the surrounding number can be obtained. 

\begin{theorem}\label{thm: general upper bound on surr num for strong products}
    If $G$ and $H$ are graphs, then
    \begin{align*}
        \sigma(G\boxtimes H) \leq \min\{ &\sigma(H)\cdot (B(G)+1) + \sigma(G), \\
        &\sigma(G)\cdot (B(H)+1) + \sigma(H)\}.
    \end{align*}
\end{theorem}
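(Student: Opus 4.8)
The plan is to adapt the strategy from the proof of Theorem \ref{thm: General upper bd on ccr of strong prod} to the surrounding setting, replacing every appeal to the cheating robot number with the surrounding number and replacing the push-based bookkeeping with the simpler observation that, in Surrounding Cops and Robbers, the cops need not worry about edge-traversal rules in the same way. By symmetry it suffices to establish the first bound, namely $\sigma(G\boxtimes H) \leq \sigma(H)\cdot (B(G)+1) + \sigma(G)$; the second follows by interchanging the roles of $G$ and $H$. First I would fix a winning strategy $S$ for $\sigma(H)$ cops in Surrounding Cops and Robbers on $H$, and let $x_1, \dots, x_{\sigma(H)}$ be their starting vertices. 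I would place $B(G)+1$ cops on each subgraph $G.\{x_i\}$, plus an additional $\sigma(G)$ cops held in reserve (say starting on $G.\{x_1\}$), for a total of $\sigma(H)\cdot(B(G)+1) + \sigma(G)$ cops.

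Next I would describe the cop motion in two layers, mirroring the strong-product proof. On each copy $G.\{x_i\}$, the $B(G)+1$ cops first use a cop winning strategy on $G$ to land one cop on Robert's shadow, then keep one cop on the shadow while the remaining $B(G)$ cops maintain the bodyguard winning strategy on $G$, treating the shadow as the president. Whenever the strategy $S$ moves its cop $c_i$ from a vertex $v_1$ to an adjacent (or equal) vertex $v_2$ in $H$, the corresponding block $C_i$ of $B(G)+1$ cops slides from $G.\{v_1\}$ to $G.\{v_2\}$, which is legal because in $G\boxtimes H$ a cop at $(u,v_1)$ may move to any $(x,v_2)$ with $x\in N_G[u]$; this lets the block preserve both the shadow-tracking and the bodyguard configuration. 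Because $S$ eventually surrounds the robber in $H$, these blocks force Robert's $H$-coordinate into the final surrounded position, at which point every copy $G.\{u\}$ for $u$ adjacent to Robert's $H$-coordinate holds $B(G)+1$ cops that prevent Robert from changing his $H$-coordinate. The reserve $\sigma(G)$ cops then move onto Robert's own copy $G.\{v_f\}$ and execute a winning surrounding strategy on $G$ to surround him.

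The main obstacle is handling the distinction between Surrounding Cops and Robbers and Cops and Cheating Robot: in the surrounding game the robber may end his turn on a cop's vertex only at the cost of losing, so I must verify that the blocking configuration genuinely surrounds rather than merely pushes. The key point is why a single $\sigma(H)$-strategy together with bodyguard escorts suffices without the $\max\{\push(H),1\}$ factor that appears in Theorem \ref{thm: General upper bd on ccr of strong prod}. The reason is that $B(G)+1$ cops on each neighbouring copy already fully occupy nothing in particular, but by the bodyguard strategy they surround the shadow and thus block every edge Robert could use to leave $G.\{v_f\}$ in the $H$-direction; combined with the $\sigma(G)$ cops surrounding him within $G.\{v_f\}$, Robert's entire closed neighbourhood in $G\boxtimes H$ is covered. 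I would take care to confirm that a cop occupying Robert's shadow on a neighbouring copy, together with the bodyguards there, blocks all diagonal strong-product edges leaving $v_f$; this adjacency check, rather than any deep new idea, is where the argument must be pinned down, and it is exactly the point where the surrounding win condition replaces the more delicate push accounting of the earlier theorem.
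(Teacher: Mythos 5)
Your proposal is correct and follows essentially the same route as the paper's proof: the same placement of $B(G)+1$ cops per copy $G.\{x_i\}$ executing the shadow-catching and bodyguard strategies, the same sliding of these blocks along the $H$-moves of a $\sigma(H)$-strategy, and the same endgame in which $\sigma(G)$ reserve cops surround the robber inside $G.\{v_f\}$ while the bodyguard blocks on adjacent copies seal off all strong-product edges leaving that copy. The adjacency check you flag is precisely the point the paper also relies on (via its reference back to the strong-product argument for the cheating robot number), so no new idea is needed.
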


\begin{proof}
    Let $S$ be a winning strategy in Surrounding Cops and Robbers for $\sigma(H)$ cops on $H$ and let $x_1,\dots, x_{\sigma(H)}$ be the vertices in $H$ that the cops start on when using strategy $S$. We now give a winning strategy for $\sigma(H)\cdot (B(G)+1) + \sigma(G)$ cops in Surrounding Cops and Robbers on the graph $G\boxtimes H$.

    At the beginning of the game, $B(G) + 1$ cops will be placed on $G.\{x_i\}$ for each $1\leq i\leq \sigma(H)$ and an additional $\sigma(G) + 1$ cops will be placed on $G.\{x_1\}$. In finitely many turns, the $B(G) + 1$ cops on each $G.\{x_i\}$ will use the cop winning strategy and the bodyguard strategy to indefinitely capture the closed neighbourhood of the robber's shadow in the same way as in the proof of Theorem \ref{thm: General upper bd on ccr of strong prod}. These cops will also move to adjacent copies of $G$ as needed according to the strategy $S$ in the same way as in the proof of Theorem \ref{thm: General upper bd on ccr of strong prod}. In finitely many moves, the robber will be forced onto the subgraph $G.\{v_f\}$ and for each $u\in V(H)$ such that $uv_f \in E(H)$, the robber will be unable to move onto $G.\{u\}$ due to the $B(G) + 1$ cops on each of the $G.\{u\}$. To finish the game, the remaining $\sigma(G)$ cops will move to $G.\{v_f\}$ and use a winning strategy in Surrounding Cops and Robbers to eventually surround the robber in $G.\{v_f\}$. Since the rest of the robber's adjacent vertices outside of $G.\{v_f\}$ are occupied by the groups of $B(G) + 1$ cops, the robber is surrounded and the cops win. 
    
    This proves that $\sigma(G\boxtimes H) \leq \sigma(H)\cdot (B(G)+1) + \sigma(G)$. A similar proof can be done to show that $\sigma(G\boxtimes H) \leq \sigma(G)\cdot (B(H)+1) + \sigma(H)$.
\end{proof}


The bound in Theorem \ref{thm: general upper bound on surr num for strong products} is tight. Consider the game being played on $C_n \boxtimes C_m$ where $n\leq 5$. Since $\delta(C_n \boxtimes C_m) = 8$, $\sigma(C_n \boxtimes C_m) \geq 8$. It is easy to see that $\sigma(C_n) = \sigma(C_m) = 2$. From Lemma \ref{lem: Bodyguard numbers of cycles} we know that $B(C_n) = 2$. Therefore the upper bound is $2(2+1) + 2 = 8$. Thus, $\sigma(C_n \boxtimes C_m) = 8$.

\begin{theorem}\label{thm: surr num of k-D strong grids}
    If $k\in \mathbb{Z}^+$ and $n_i \geq 3$ for $1\leq i\leq k$, then $$\sigma\left(\boxtimes^k_{i=1} P_{n_i}\right) \leq \frac{3^k + 1}{2}.$$
\end{theorem}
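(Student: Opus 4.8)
The plan is to mirror the inductive argument used for the cheating robot number in Theorem~\ref{thm: ccr of k-D strong grids}, but now applying the surrounding-number product bound from Theorem~\ref{thm: general upper bound on surr num for strong products} together with the bodyguard number of strong grids from Lemma~\ref{lem: Bodyguard number of strong grid}. First I would establish the base case $k=1$: the path $P_{n_1}$ has $\sigma(P_{n_1}) = 2$ (two cops suffice to surround the robber on a path, one on each side), and $\frac{3^1+1}{2} = 2$, so the bound holds with equality. It is worth checking whether $k=2$ is the cleaner base case, since $\sigma(P_n \boxtimes P_m)$ may already be known; but $k=1$ should work directly and keeps the induction uniform.

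For the inductive step, fix $k \geq 2$ and assume $\sigma\!\left(\boxtimes_{i=1}^{k-1} P_{n_i}\right) \leq \frac{3^{k-1}+1}{2}$. I would write $\boxtimes_{i=1}^{k} P_{n_i} = \left(\boxtimes_{i=1}^{k-1} P_{n_i}\right) \boxtimes P_{n_k}$ and apply Theorem~\ref{thm: general upper bound on surr num for strong products} with $G = \boxtimes_{i=1}^{k-1} P_{n_i}$ and $H = P_{n_k}$, taking the branch that uses $B(G)$. By Lemma~\ref{lem: Bodyguard number of strong grid}, $B\!\left(\boxtimes_{i=1}^{k-1} P_{n_i}\right) = \frac{3^{k-1}-1}{2}$, and since $P_{n_k}$ is a tree (a path), $\sigma(P_{n_k}) = 2$. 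The computation then runs:
\begin{align*}
    \sigma\!\left(\boxtimes_{i=1}^{k} P_{n_i}\right) &\leq \sigma(P_{n_k})\cdot\left(B\!\left(\boxtimes_{i=1}^{k-1} P_{n_i}\right)+1\right) + \sigma\!\left(\boxtimes_{i=1}^{k-1} P_{n_i}\right) \\
    &\leq 2\left(\frac{3^{k-1}-1}{2} + 1\right) + \frac{3^{k-1}+1}{2} \\
    &= \left(3^{k-1} + 1\right) + \frac{3^{k-1}+1}{2} \\
    &= \frac{3\cdot 3^{k-1} + 3}{2} = \frac{3^k + 3}{2}.
\end{align*}

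Here lies the main obstacle: the naive estimate yields $\frac{3^k+3}{2}$, which exceeds the target $\frac{3^k+1}{2}$ by exactly one. This gap of one must be eliminated, and I expect it comes from the choice of base case and a sharper tracking of constants rather than from the crude product bound. The likely fix is to anchor the induction at $k=2$ using the exact value $\sigma(P_n \boxtimes P_m) = \frac{3^2+1}{2} = 5$ (established directly, since $B(P_n) = 2$ and $\sigma(P_m) = 2$ give $2(2+1)+2 = 8$ from one branch, so the tighter value must be argued separately or the other branch used), and to observe that $P_{n_k}$ being a tree allows reusing the already-captured bodyguards more efficiently than the generic bound assumes. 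Concretely, I suspect the intended argument does \emph{not} simply invoke Theorem~\ref{thm: general upper bound on surr num for strong products} as a black box but instead re-derives the strong-product strategy while noting that for a path factor the final surrounding step needs only $\sigma(P_{n_k})-1 = 1$ additional cop beyond the bodyguard layers, shaving off the extra unit. Resolving this off-by-one—by correctly identifying which cops do double duty in the last copy of the grid—is where the real work of the proof will concentrate.
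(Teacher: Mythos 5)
You have correctly diagnosed the obstacle --- the paper itself remarks that the induction used for Theorem~\ref{thm: ccr of k-D strong grids} fails here, with the product bound of Theorem~\ref{thm: general upper bound on surr num for strong products} overshooting the target --- but your proposal does not overcome it. Anchoring the induction at $k=2$ cannot help: the surplus is not introduced once at the base case but is re-introduced at \emph{every} inductive step, since $\sigma(P_{n_k})\bigl(B(\boxtimes_{i=1}^{k-1}P_{n_i})+1\bigr)+\sigma\bigl(\boxtimes_{i=1}^{k-1}P_{n_i}\bigr) \leq (3^{k-1}+1)+\frac{3^{k-1}+1}{2}=\frac{3^k+3}{2}$ exceeds $\frac{3^k+1}{2}$ by one regardless of how sharp the hypothesis at level $k-1$ is. Your suggested repair is also aimed at the wrong factor: in the branch you compute, the $+\sigma(G)$ term pays for the final surround inside a copy of $G=\boxtimes_{i=1}^{k-1}P_{n_i}$, not inside the path, so the observation that a path needs ``$\sigma(P_{n_k})-1=1$ additional cop'' does not apply to the term you would need to reduce; and in the other branch (where the path is $G$) the bound is $\frac{3^k+7}{2}$, which is worse still. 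The claim that ``cops do double duty in the last copy'' is exactly the assertion that needs proof, and no argument is given.

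The missing idea is that the paper abandons the product bound entirely and instead routes through the push number via Theorem~\ref{thm: bounds on surr. num in terms of ccr and push num}: since $\ccr\bigl(\boxtimes_{i=1}^{k}P_{n_i}\bigr)\leq \frac{3^k-1}{2}$ by Theorem~\ref{thm: ccr of k-D strong grids}, it suffices to exhibit a winning cheating-robot strategy for $\frac{3^k-1}{2}$ cops in which only \emph{one} distinct cop ever pushes Robert, yielding $\sigma \leq \ccr + 1 = \frac{3^k+1}{2}$. The paper does this with an explicit global (non-inductive) strategy: $\frac{3^k-3}{2}$ cops occupy blocking sets $A_2,\dots,A_k$ in Robert's neighbourhood so that every move he makes monotonically increases some coordinate (he can never return to a previously visited vertex), while a single designated cop pushes him each turn; this forces him onto the path of vertices $(x,n_2,\dots,n_k)$ and then into a corner where he is surrounded. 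Without this change of strategy --- or some other genuinely new argument that saves the extra cop --- your proof does not close.
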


Notably, Theorem \ref{thm: surr num of k-D strong grids} cannot be proven using the same induction technique from the proof of Theorem \ref{thm: ccr of k-D strong grids}. Indeed, if we attempted to use induction on $k$ and applied Theorem \ref{thm: general upper bound on surr num for strong products}, in the induction step we would obtain 
\begin{equation*}
    \sigma(\boxtimes^k_{i=1} P_{n_i}) \leq \min\left\{ \frac{5(3^{k-1}) + 1}{2}, \frac{3^k + 7}{2}\right\}.
\end{equation*}
Instead, we make use of the push number and Theorem \ref{thm: bounds on surr. num in terms of ccr and push num}.

\begin{proof}
    From Theorem \ref{thm: ccr of k-D strong grids}, $\ccr(\boxtimes^k_{i=1} P_{n_i}) \leq \frac{3^k -1}{2}$. To show that $\sigma(\boxtimes^k_{n_i}) \leq \frac{3^k -1}{2} + 1$, by Theorem \ref{thm: bounds on surr. num in terms of ccr and push num} it suffices to show that $\frac{3^k - 1}{2}$ cops can capture Robert on $\boxtimes^k_{i=1} P_{n_i}$ with only one cop pushing Robert. In this proof, we will allow Robert to be more powerful, as if he were a robber in the surrounding variant, for part of the game and then only have one cop push him afterwards.


    Begin by placing all of the cops on the vertex $(0,\dots, 0)$. Let $(v_1, \dots, v_k)$ denote the vertex Robert is on. 
    For $2\leq \ell\leq k$, let $A_\ell$ denote the set of all vertices adjacent to Robert labelled $(x_1, \dots, x_{\ell-1}, v_\ell - 1, v_{\ell+1}, v_{\ell+2}, \dots, v_k)$ where $x_i \in \{v_i - 1, v_i, v_i + 1\}$. For each $2 \leq \ell \leq k$, $|A_\ell| = 3^{\ell - 1}$. Thus, $\sum_{\ell=2}^k |A_\ell| = \frac{3^k - 3}{2}$. Since $c(\boxtimes^k_{i=1} P^\prime) = 1$, $\frac{3^k - 3}{2}$ cops can place themselves on the vertices in $A_2, \dots, A_k$ in finitely many turns. While these cops are getting into position, we will allow Robert to have the ability to traverse edges that cops traverse as if Robert was a robber in the surrounding variant. Regardless of whether he has this ability or not, it does not change the cops' ability to set up their strategy. This leaves one cop who will, after finitely many turns, move onto the vertex Robert is on and continue to push him every turn. Once all of the cops are in their positions, there are three possibilities every time Robert moves from $(v_1,\dots, v_k)$ to $(v^\prime_1, \dots, v^\prime_k)$:
    \begin{itemize}
        \item[(i)] $v^\prime_1 = v_1 + 1$ and $v^\prime_i = v_i$ for all $2\leq i\leq k$;
        \item[(ii)] $v^\prime_1 = v_1 - 1$ and $v^\prime_i = v_i$ for all $2\leq i\leq k$;
        \item[(iii)] for some $2\leq i\leq k$, $v^\prime_i \neq v_i$.
    \end{itemize}

    We claim that if (iii) occurs enough times, Robert will be forced onto a vertex of the form $(x, n_2,\dots, n_k)$ where $1\leq x\leq n_1$. Because of the cops' placements, we claim that Robert is not able to move back to any vertex he was on in any previous round. Suppose for a contradiction that Robert was able to move to a vertex $(u_1,\dots, u_k)$ he was at on some previous round. By the way the cops are positioned, at least one of Robert's coordinates increases whenever he moves. Let $u_{j_1}, \dots, u_{j_m}$ with $j_1\leq j_2\leq \cdots \leq j_m$ be the coordinates of Robert's position that increase when Robert first moves off of $(u_1,\dots, u_k)$. Then for Robert to be able to move back onto $(u_1, \dots, u_k)$, at some point he would need to decrease $u_{j_m}$ without increasing any of $u_{j_m + 1},\dots, u_k$ which is not possible due to the positioning of the cops. Therefore every time (iii) occurs, Robert moves to a vertex that he can never go back to. So if (iii) occurs at most $\prod^k_{i=2} n_i$ times, he will be forced onto a vertex of the form $(x, n_2,\dots, n_k)$.

    Next, we claim that (iii) is impossible for Robert to avoid indefinitely. Suppose for a contradiction that he can. If (i) occurs, since Robert is unable to move back to the vertex he was on in the previous round, he is unable to move such that (ii) occurs. Thus, (i) is Robert's only movement option if he wants to avoid (iii). However, Robert's first coordinate $v_1$ can only decrease finitely many times until $v_1 = 1$. Once this occurs, on Robert's next move (iii) will occur. A similar argument holds if (ii) occurs. Therefore, (iii) is impossible for Robert to avoid indefinitely and, for as long as the game continues, Robert will continually be forced to change at least one of $v_2, \dots, v_k$ after finitely many turns.

    So after finitely many turns, Robert will be on the path induced by the vertices $(1, n_2,\dots, n_k)$, $(2, n_2,\dots, n_k)$, \dots, $(n_1, n_2,\dots, n_k)$. Once here, Robert will be unable to move off the path due to the cops' positioning. Eventually the one cop that is continuously pushing Robert will force him onto either $(1, n_2,\dots, n_k)$ or $(n_1, n_2,\dots, n_k)$ where he will be surrounded. 
\end{proof}

\section{Further Directions}\label{sec: further directions}

In Section \ref{sec: push num}, we gave some results on the push number and showed that $\sigma(G) \leq \ccr(G) + \push(G)$ for any graph $G$. In \cite{HN21}, Huggan and Nowakowski ask whether it is true that $0\leq \sigma(G) - \ccr(G) \leq 1$ for any $G$. One way to answer this in the affirmative is to show that $\push(G) \leq 1$ for any $G$ and use Theorem \ref{thm: bounds on surr. num in terms of ccr and push num}. We have yet to find an example where $\push(G) > 1$. Is it true that $\push(G) \leq 1$ for all $G$?

We could also define an analogous parameter to the push number for the original Cops and Robber game. Consider a \emph{push in Cops and Robber} to be when a cop moves into the robber's neighbourhood, which forces the robber to move to a new vertex or else he will lose on the next cop turn. Define the Cops and Robber push number to be the minimum number of cops, out of $c(G)$ cops playing, needed to push the robber in order to capture him. Let $p_c(G)$ denote this new push number on the graph $G$. This push number can be used to compare Cops and Robber to one if its variants, Cops and Attacking Robbers. In Cops and Attacking Robbers, first introduced in \cite{BFG13} and further studied in \cite{CHM24}, the robber is able to move onto a vertex occupied by a single cop and eliminate that cop from the game. The minimum number of cops needed to win Cops and Attacking Robbers on $G$ is denoted $cc(G)$. Using a similar argument as in the proof of Theorem \ref{thm: bounds on surr. num in terms of ccr and push num}, it can be seen that $cc(G) \leq c(G) + p_c(G)$. 

Notably, there are examples where $p_c(G) > 1$. In \cite{BFG13}, an example was given where $cc(G) - c(G) = 2$. Applying this to the above upper bound, and we get that $p_c(G) \geq 2$. Similarly, in \cite{CHM24} an example is given where $cc(G) - c(G) = 3$ and so, consequently, $p_c(G) \geq 3$. How large can $p_c(G)$ be? What are the characterizations for $p_c(G) = k$ where $k\geq 0$? What other information about Cops and Robber and its variants can be obtained by these push number parameters?

In Section \ref{sec: planar graphs} we gave bounds on the cheating robot number for planar graphs and bipartite planar graphs. While we gave an example showing that the bound $\ccr(G) \leq 4$ is tight for bipartite planar graphs, the question of how large the cheating robot number can get for planar graphs remains open. Since there exist 5-regular planar graphs, for example the icosahedron, by Theorem \ref{thm: k-core lower bound on ccr} and Corollary \ref{cor: ccr of planar}, $5\leq \max\{\ccr(G) | \text{$G$ is planar}\} \leq 7$.


\begin{thebibliography}{99}

\bibitem{AF84}
M.~Aigner and M.~Fromme, A game of cops and robbers, \emph{Discrete Appl. Math.} \textbf{8} (1984) 1--11.

\bibitem{ADG24}
S.S.~Akhtar, S.~Das, and H.~Gahlawat, Cops and Robber on butterflies, grids, and AT-free graphs, \emph{Discrete Appl. Math.} \textbf{345} (2024) 231--245.

\bibitem{BI93}
A.~Berarducci and B.~Intrigila, On the cop number of a graph, \emph{Advances in Appl. Math.} \textbf{14} (1993) 389--403.

\bibitem{BCP10}
A.~Bonato, E.~Chiniforooshan, and P.~Pra\l at, Cops and Robbers from a distance, \emph{Theoretical Comp. Sci.} \textbf{411} (2010), 3834--3844.

\bibitem{BFG13}
A.~Bonato, S.~Finbow, P.~Gordinowicz, A.~Haider, W.B.~Kinnersley, D.~Mitsche, P.~Pra\l at, and L.~Stacho, The robber strikes back,  In: \emph{Proceedings of ICC3} (2013). 

\bibitem{BN11}
A.~Bonato and R.J.~Nowakowski, \emph{The Game of Cops and Robbers on Graphs}, Vol. 61 of Student Mathematical Library, American Mathematical Society, Providence, R.I., 2011.

\bibitem{BH19}
P.~Bradshaw and S.A.~Hosseini, Surrounding cops and robbers on graphs of bounded genus, preprint arXiv:1909.09916, (2019).

\bibitem{BCCDFJP20}
A.C.~Burgess, R.A.~Cameron, N.E.~Clarke, P.~Danziger, S.~Finbow, C.W.~Jones, and D.A.~Pike, Cops that surround a robber, \emph{Discrete Appl. Math.} \textbf{285} (2020), 552--566.

\bibitem{CDK24}
N.E.~Clarke, D.~Dyer, and W.~Kellough, Eternally surrounding a robber, preprint arXiv:2408.10452, (2024).

\bibitem{CFM24}
N.E.~Clarke, S.~Finbow, and T.~Mullen, Surrounding an active robber, preprint (2024).

\bibitem{CM12}
N.E.~Clarke and G.~MacGillivray, Characterizations of $k$-copwin graphs, \emph{Discrete Math.} \textbf{312} (2012) 1421--1425.

\bibitem{CHM24}
A.~Clow, M.A.~Huggan, and M.E.~Messinger, Cops and Attacking Robbers with Cycle Constraints, preprint arXiv:2408.02225, (2024). 

\bibitem{F87}
P.~Frankl, Cops and robbers in graphs with large girth and Cayley graphs, \emph{Discrete Appl. Math.} \textbf{17} (1987), 301--305.

\bibitem{H19}
M.A.~Huggan, Studies in alternating and simultaneous combinatorial game theory, Ph.D. thesis, Dalhousie University (2019).

\bibitem{HN21}
M.A.~Huggan and R.J.~Nowakowski, Cops and an insightful robber, \emph{Discrete Appl. Math.} \textbf{295} (2021), 112--119.

\bibitem{HN23}
M.A.~Huggan, R.J.~Nowakowski, Cheating Robot Games, \emph{Internat. J. Game Theory} (2023) https://doi.org/10.1007/s00182-023-00862-3.

\bibitem{IK00}
W.~Imrich and S.~Klav\v zar, \emph{Product graphs, Structure and Recognition}, Wiley-Interscience Series in Discrete Mathematics and Optimization, Wiley-Interscience, New York, 2000.

\bibitem{JSU23}
P.~Jungeblut, S.~Schneider, and T.~Ueckerdt, Cops and robber—when capturing is not surrounding, \emph{Lecture Notes in Comp. Sci.} \textbf{14093} (2023) 403--416.

\bibitem{K24}
W.~Kellough, \emph{Cops, a Cheating Robot, Bodyguards and Presidents}, M.Sc. Thesis, Memorial University of Newfoundland, 2024. 

\bibitem{K15}
W.B.~Kinnersley, Cops and Robbers is EXPTIME-complete, \emph{J. Combin. Theory Ser. B} \textbf{111} (2015) 201--220.

\bibitem{NW83}
R.J.~Nowakowski and P.~Winkler, Vertex-to-vertex pursuit in a graph, \emph{Discrete Math.} \textbf{43} (1983) 235–-239.

\bibitem{Q78}
A.~Quilliot, Jeux et pointes fixes sur les graphes, \emph{Th\`ese de 3\`eme cycle}, Universit\'e de Paris VI, 1978, 131--145.
    
\end{thebibliography}
\end{document}